\newtheorem{theorem}{Theorem}[section]
\newtheorem{lemma}[theorem]{Lemma}
\theoremstyle{definition}
\newtheorem{definition}[theorem]{Definition}
\newtheorem{example}[theorem]{Example}
\theoremstyle{remark}
\newtheorem{remark}[theorem]{Remark}
\numberwithin{equation}{section}
\newcommand{\R}{\mathbb{R}}
\newcommand{\RA}{\rightarrow}
\newcommand{\Zn}{\mathbb{Z}^n}
\newcommand{\ttm}{2^{-\frac{m}{2}}}
\newcommand{\XB}{\frac{1}{1+|x|^\beta}}
\newcommand{\sqt}{\sqrt{t}}
\newcommand{\qua}{\frac{1}{4}}
\newcommand{\YBB}{Y_{1,T}^\beta}
\newcommand{\YBBB}{Y_{2, T}^\beta}
\newcommand{\fqt}{\sqrt[4]{t}}
\newcommand{\tfm}{2^{-\frac{m}{4}}}
\newcommand{\falpha}{{\frac{1}{\alpha}}}
\newcommand{\HH}{\mathcal{H}}
\newcommand{\K}{\mathcal{K}}
\newcommand{\stkout}[1]{\ifmmode\text{\sout{\ensuremath{#1}}}\else\sout{#1}\fi}
\let\isout\sout
\renewcommand{\sout}[1]{\ifmmode\text{\isout{\ensuremath{#1}}}\else\isout{#1}\fi}
\newcommand{\pa}{\partial}
\begin{document}

\title{Stability of Self-similar Solutions to Geometric Flows}

\author{Hengrong Du \\ Department of Mathematics\\ Purdue University\\ West Lafayette, IN 47906\\
Email: \href{mailto:du155@purdue.edu}{du155@purdue.edu} \and Nung Kwan Yip\\ Department of Mathematics\\ Purdue University\\ West Lafayette, IN 47906\\
Email: \href{mailto:yip@math.purdue.edu}{yip@math.purdue.edu}}



\date{}


\maketitle
\begin{abstract}
We show that self-similar solutions for the mean curvature flow,  
surface diffusion and  Willmore flow of entire graphs are stable upon  
perturbations of initial data with small Lipschitz norm. 
Roughly speaking, the perturbed solutions are asymptotically self-similar as time tends to infinity.  Our results are built upon the global analytic solutions constructed by Koch and Lamm \cite{KochLamm}, the compactness arguments adapted by Asai and Giga \cite{Giga2014}, and the spatial equi-decay properties on certain weighted function spaces. 
The proof for all of the above flows are achieved
in a unified framework by utilizing the estimates of the linearized 
operator.
\end{abstract}



\section{Introduction}

We analyze in this paper the long-time asymptotics of various geometric flows,
in particular the stability of self-similar solutions. 
From the point of view of calculus of variations, many geometric flows can be 
seen as the negative gradient flows of some geometric functionals with respect 
to certain underlying metric. Heuristically, the gradient descent nature of 
the flows evolves general initial data toward a critical point of 
the corresponding functional. 
These evolutions are often modeled by nonlinear parabolic partial differential 
equations. The long time asymptotics of the solution is one of the key 
questions to be investigated.
For instance, in the celebrated work \cite{Simon1983} of Leon Simon, the asymptotics of a large class of such geometric evolution equations are studied by 
infinite dimensional version of the \L{}ojasiewicz inequalities combined with the Liapunov--Schmidt reduction. 
It is also worth pointing out that in \cite{EllesSampson} Eells and Sampson 
used the long-time limit of heat flows to construct harmonic mappings between Riemannian manifolds under certain curvature assumptions. 

The geometric flows studied in this paper is of curvature 
driven type which arises from energy minimization of the surface area 
functional. This naturally leads to evolutions involving mean curvature 
which is the first variation of the surface area. These motions appear often 
in the modeling of materials science such as phase transitions and grain growth
\cite{AC1979, MullinsTG}. It is also used in describing the bending of 
membranes in red blood cells \cite{helfrich, seifert}. The underlying equations are related to 
mean curvature flows (MCF), surface diffusion (SD) and Willmore flows (WF) 
which are the three equations analyzed in this paper.

One mathematical point to note is that the equations to be 
analyzed include fourth order flows which are much harder to handle than their
second order counterparts due to the lack of maximum or comparison principle.
On the other hand, these flows enjoy certain invariant
property leading to the existence of self-similar solutions. The main goal 
of the current paper is to analyze the stability of these solutions. 
More precisely, under fairly general initial conditions, we will show that
the solutions to these equations converge to some self-similar form.
In order to take advantage of a general unified approach, we restrict ourselves 
to entire graph solutions relying very much on linearized analysis.

One can also interpret this phenomena of self-similarity using the
{\em renormalization group} method as in \cite{BricmontKupiainenLin}. 
The key idea is that after rescaling or zooming out in the spatial 
variable, suppose the initial data converges to a scale invariant function 
which is determined by the behavior of the data at infinity, then
the solution will converge to a
scale invariant solution, or so-called {\em self-similar solution}. 
In other words, the long-time asymptotics are determined by the rescaling limit of the initial data. Hence we expect that if the initial data is 
perturbed without changing the scaling limit, then the corresponding solution will more and more looked like the self-similar solution 
corresponding to the unperturbed scale invariant initial data. 
There is also a huge literature where such a phenomena is proved
for semilinear heat equations - see for example \cite{cazenave, gmiraVeron, kavian, meyer}, just to name a few.
Another technique extensively used in the case of MCF is the 
{\em monotonicity formula}. It 
has been used in this case to characterize the form
of self-similar solutions and the convergence to them
\cite{Huisken, EH}. 
This is also the pre-cursor to the more recent entropy method
to characterize self-similar shrinkers \cite{coldingGenSing}.

In this paper, we will investigate the stability of self-similar solutions 
corresponding to MCF, SD and WF. 
Note that global-in-time existence of classical solutions to these geometric 
flows with general initial data does not hold due to the possibility of finite 
time blow-ups. On the other hand, in the case of graph setting,
it is possible to have long time solutions. For MCF, 
this is comprehensively analyzed in \cite{EH, EHInv}.
In a very interesting paper \cite{KochLamm}, Koch and Lamm has constructed 
a unique global-in-time solution to these geometric flows under 
{\em small Lipschitz norm} assumption on the initial data. 
This is in contrast to those existence results of 
classical solutions making use of maximal regularity property of elliptic 
operators where the initial data are required to be $C^{1,\alpha}$ or 
$C^{2,\alpha}$ (depending on the order of the equation) -- 
see \cite{Simonett1, Simonett2, Simonett3} for examples of such results.
The main technique of \cite{KochLamm}, originated from Koch--Tataru \cite{KochTataru} for incompressible Navier-Stokes equations, is a fixed point argument on 
some scale invariant function spaces. 
Even though it can only handle the case
of graphs, all the above geometric flows in general dimensions can be tackled 
in a unified framework. In addition, the approach does not rely on maximum
principle which only works for second order scalar PDEs. Thus, it is applicable for PDE systems and higher order equations. 

Another relevant work is Asai-Giga \cite{Giga2014} which establishes a
stability result for self-similar solutions to a one dimensional surface 
diffusion with bounded initial data. It uses a compactness argument in some 
H\"{o}lder spaces. An earlier work \cite{duchon}
proves a similar result but it seems the technique is only applicable to the 
one dimensional curve case. From an application point of view, these two works touch upon the 
celebrated model called {\em thermal grooving} first described by 
Mullins \cite{MullinsTG}.
Combining the techniques of \cite{KochLamm} and \cite{Giga2014}, we are 
able to show a local-in-space stability result (Theorem \ref{thm:Main})
and also a global-in-space result (Theorem \ref{thm:GlobalConver}). 
The latter is achieved in the setting of some weighted function spaces.
Qualitatively, we have extended the result of \cite{Giga2014, duchon}
to higher dimensions with unbounded initial data.

This paper is organized as follows. 
In Section \ref{Sec:GeomFlows}, we introduce the geometric flows, the definition of 
self-similar solutions, and the statement of our main results. 
Then we outline the strategy of proof.
Section \ref{sec:localstability} is devoted to the proof of Theorem \ref{thm:Main} which asserts the local-in-space convergence of the perturbed solution. 
Next in Section \ref{sec:equidecay}, we prove our global-in-space 
convergence result (Theorem \ref{thm:GlobalConver}) under a spatial 
decaying assumption on the initial perturbation. We make a remark in Section \ref{sec:polyharm} on the generalization to polyharmonic  flows. The proofs of technical lemmas like Lemma \ref{lemma:keylemma1} and Lemma \ref{lemma:keylemma2} are put in the Appendix.

Before getting into the technical details, we introduce one 
notation to be used throughout this paper. We write 
for any two positive quantities that $A \lesssim B$ if
there is a universal constant $C$ such that $A \leq CB$. The value of
the constant is not relevant in the argument and can change from one
line to the other.

\section{Geometric flows}\label{Sec:GeomFlows}
Let $\Sigma$ be a closed hypersurface in $\R^{n+1}$.
The area functional of $\Sigma$ is given by 
\begin{equation}
  A(\Sigma)=\int_{\Sigma}1d\mu_{g}, 
  \label{eqn:GeomtricFunctional}
\end{equation}
where $g$ is the induced metric from the immersion and $d\mu_g$ is the 
corresponding area element. 
The aim of this paper is to investigate the ($L^2$- and $H^{-1}$-) negative 
gradient flows of \eqref{eqn:GeomtricFunctional}. 
More precisely, we consider a time dependent hypersurface 
$\Sigma_t$ given by immersions $f:\Sigma\times \R_+\rightarrow \R^{n+1}$ 
which evolves according to
\begin{description}
\item[Mean curvature flow(MCF)] 
  \begin{equation}
    \partial_t f =\mathcal{H}:=-\nabla_{L^2} A,
    \label{eqn:MCF}
  \end{equation}
\item[Surface diffusion(SD)] 
  \begin{equation}
    \partial_t f = -\Delta_g \mathcal{H}=:-\nabla_{H^{-1}} A,
    \label{eqn:SDF}
  \end{equation}
\end{description}
where $\mathcal{H}$ represents the mean curvature vector and $\Delta_g$ is the 
Laplace--Beltrami operator with respect to the induced metric $g$. 
Note that MCF and SD can be 
recasted as the negative gradient flows to $A$ with respect to the $L^2$ and 
$H^{-1}$-metric. See \cite{taylor-cahn-1, taylor-cahn-2} 
for more details about the derivation.

We will also consider the following Willmore functional for two dimensional
surfaces ($n=2$) in $\mathbb{R}^3$:
 \begin{equation}
W(f)=\frac{1}{4}\int_{\Sigma}|\mathcal{H}|^2 d\mu_g.
\label{willmore}
\end{equation}
The negative $L^2$-gradient flow of \eqref{willmore} is then given as follows:
    \begin{description}
	\item[Willmore flow(WF)]
	\begin{equation}
	f_t^{\perp}=-\Delta_g \HH-\frac{1}{2}\HH^3+2\HH\K=:-\nabla_{L^2}W,
	\label{eqn:WF}
	\end{equation}
\end{description}
where $\K$ is the Gauss curvature of $\Sigma$. We refer the reader to 
\cite{kuwert-schatzle} for detail of the derivation.

 As mentioned earlier, in this paper, we consider the case that $\Sigma_t$ given by an entire 
graph, i.e. there exists a function $u:\R^n\times\R_+\rightarrow 
\R$ such that $\Sigma_t=\left\{ (x, u(x,t))|x\in \R^n, t\in\R_+ \right\}$. 
For concreteness, we write down the graph equations for 
\eqref{eqn:MCF}, \eqref{eqn:SDF} and \eqref{eqn:WF}:
\begin{eqnarray}
\text{\bf MCF}\,\,\,\,\,\,&&
\frac{\partial u}{\partial t} = \sqrt{1 + |\nabla u|^2}\text{div}\Big(
\frac{\nabla u}{\sqrt{1 + |\nabla u|^2}}
\Big),\\
\text{\bf SD}\,\,\,\,\,\,&&
\frac{\partial u}{\partial t} = 
-\text{div}\left[
\sqrt{1 + |\nabla u|^2}
\left(
I - \frac{\nabla u \otimes \nabla u}{1 + |\nabla u|^2}
\right)
\nabla\text{div}\left(
\frac{\nabla u}{\sqrt{1 + |\nabla u|^2}}
\right)
\right],\\
\text{\bf WF}\,\,\,\,\,\,&&
\frac{\partial u}{\partial t} = -w\text{div}\left[
\frac{1}{w}\left(
\left(
I - \frac{\nabla u \otimes\nabla u}{w^2}
\right)\nabla(w\mathcal{H})
-\frac{1}{2}\mathcal{H}^2\nabla u
\right)
\right].
\end{eqnarray}
In the above, we have used the following notations and representation
\[
w = \sqrt{1+|\nabla u|^2}
\,\,\,\,\,\,\text{and}\,\,\,\,\,\,
\mathcal{H} = \text{div}\left(\frac{\nabla u}{w}\right).
\] 

To simplify the above equations,
we borrow the contraction operator $\star$ from \cite{KochLamm} for all possible contractions between derivatives of $u$, for example, we use 
$\nabla^2 u\star\nabla u\star \nabla u$ to indicate any expression of the
form $\nabla_{ij}u\nabla_k u\nabla_l u$ with $1 \leq i,j,k,l\leq n$.
They are all treated equally in terms of analysis.
Moreover, we use $P_k(\nabla u)$ to denote some $k$-th power contraction 
of $\nabla u$, i.e., 
$$P_k(\nabla u)
=\underbrace{\nabla u\star\cdots \star\nabla u}_{k\text{-times}}
= \Pi_{j=1}^k\nabla_{i_j}u,\,\,\,\text{for some $1\leq i_j\leq n$}.
$$
As derived in \cite{KochLamm}, we can rewrite the equation 
\eqref{eqn:MCF}, \eqref{eqn:SDF} and \eqref{eqn:WF} using the above convention as follows:
\begin{description}
  \item[MCF] 
    \begin{equation}
      \partial_t u-\Delta u=w^{-2}\nabla^2 u\star P_2(\nabla u),
      \label{eqn:GrapMCF}
    \end{equation}
  \item[SD]
    \begin{equation}
      \partial_t u+\Delta^2 u=\nabla_if_1^i[u]+\nabla_{ij}f_2^{ij}[u],
      \label{eqn:GrapSDF}
    \end{equation}
  \item[WF] 
    \begin{equation}
      \partial_t u+\Delta^2 u=f_0[u]+\nabla_i f_1^i[u]+\nabla_{ij}f_2^{ij}[u],
      \label{eqn:GrapWF}
    \end{equation}
\end{description}
where  
\begin{eqnarray}
f_0[u]&=&\nabla^2 u\star\nabla^2 u\star \nabla^2 u\star \sum_{k=1}^{4}w^{-2k}P_{2k-2}(\nabla u),\label{f0}\\
f_1[u]&=&\nabla^2 u\star \nabla^2 u\star \sum_{k=1}^{4}w^{-2k}P_{2k-1}(\nabla u),\label{f1}\\
f_2[u]&=&\nabla^2 u\star\sum_{k=1}^{2}w^{-2k}P_{2k}(\nabla u).
\label{f2}
\end{eqnarray}
Under the assumption that $|\nabla u|\lesssim 1$, 
the following crude bounds for the nonlinear terms play crucial roles 
in our analysis:
\begin{align}
|f_0[u]|\lesssim |\nabla^2u|^3, \quad
|f_1[u]|\lesssim |\nabla^2 u|^2,
\,\,\, \text{ and }\,\,\,
|f_2[u]|\lesssim |\nabla^2 u|.
\label{eqn:WFnonlinear}
\end{align}

 Abstractly, we can write \eqref{eqn:GrapMCF}, \eqref{eqn:GrapSDF}, and 
\eqref{eqn:GrapWF} in the following form
\begin{equation}
  \left\{
    \begin{array}{l}
      \partial_t u+A u=N[u], \quad (x, t)\in \R^n\times(0, \infty), \\
      u(x, 0)=u_0(x), \quad x\in \R^n,
    \end{array}
    \right.
    \label{eqn:AbstractPDE}
  \end{equation}
  where $A=-\Delta$ or $\Delta^2$, and $N[u]$ is the nonlinear term 
in the right hand sides of \eqref{eqn:GrapMCF}, \eqref{eqn:GrapSDF}, or 
\eqref{eqn:GrapWF}.
   We say $u(x,t)$ is a \emph{mild solution} to \eqref{eqn:AbstractPDE} 
if it satisfies the following integral equation
  \begin{equation}
  u(x,t)=e^{-A t}u_0(x)+\int_{0}^{t}e^{-(t-s)A}N[u](x, s)ds, \quad(x,t)\in\R^n\times(0,\infty).
  \label{}
  \end{equation}
  where $e^{-A t}$ is the semigroup generated by $-A$.
    If the Lipschitz norm of $u_0$ is small, the global well-posedness of mild solution to \eqref{eqn:AbstractPDE} is obtained by Koch--Lamm \cite{KochLamm}. 

One of the most important features of these equations is their 
{\em scale invariant} property. More precisely, for any positive constant $\lambda$,
if we define $\Sigma_\lambda := \lambda^{-1}\Sigma$, then 
$$
\mathcal{H}_{\Sigma_\lambda} = \lambda\mathcal{H}_\Sigma,\,\,\,
\mathcal{K}_{\Sigma_\lambda} = \lambda^2\mathcal{K}_\Sigma,\,\,\,
\text{and}\,\,\,
\triangle_{\Sigma_\lambda} = \lambda^2\triangle_\Sigma.
$$
In terms of equation, let $u$ be a mild solution to 
\eqref{eqn:AbstractPDE}. If we similarly define
$u_{\lambda}(x, t):=\lambda^{-1}u(\lambda x, \lambda^{\alpha}t)$, 
where $\alpha=2$ if $A=-\Delta$ and $\alpha=4$ if $A=\Delta^2$.
Then $u_\lambda$ solves the same PDE but with rescaled initial data, i.e., 
  \begin{equation}
    \left\{
      \begin{array}{l}
	\partial_t u_\lambda+Au_\lambda=N[u_\lambda], \quad (x, t)\in \R^n\times (0, \infty), \\
	u_\lambda(x, 0)=\lambda^{-1}u_0(\lambda x), \quad x\in \R^n.
      \end{array}
      \right.
      \label{}
    \end{equation}
Note that with $y=\lambda x$, then $\nabla_x u_\lambda = \nabla_y u$,
$\nabla^2_x u_\lambda = \lambda \nabla^2_y u$ and so forth. 
The powers of $\nabla^2 u$ 
in the nonlinear terms $f_i$ are such that 
$f_i(u_\lambda) = \lambda^{3-i}f_i(u)$ for $i=0,1,2$. Hence
$\nabla^i f_i(u_\lambda) = \lambda ^3\nabla^i f_i(u)$.
They indeed give the corresponding scale invariance with $\alpha=4$
for SD and WF. For MCF, we only have the term $f_2(u) \sim \nabla^2u$, 
corresponding to $\alpha=2$. 

The above naturally leads to the notion of self-similar 
solutions $v$ which satisfy $v_\lambda(x,t) = v(x,t)$. Setting $t=0$,
then the initial data necessarily has the property that
$v(x,0) = \lambda^{-1}v(\lambda x,0)$. 
Conversely, let $v$ be 
the solution of \eqref{eqn:AbstractPDE} with self-similar initial data 
$v_0(x)=|x|\psi\left(\frac{x}{|x|}\right)$ for some function 
$\psi: \mathbb{S}^{n-1}\longrightarrow\R$ so that $v_0$ is indeed
self-similar, $v_0(x)=\lambda^{-1}v_0(\lambda x)$. 
Since $v_\lambda$ solves the same equation and initial data, by 
the uniqueness of solution, it holds that $v_\lambda(x, t)=v(x,t)$. 
Upon introducing $\Psi(y)=v(y,1)$, we then have 
    \begin{equation}
      v(x,t)=v_{t^{-\frac{1}{\alpha}}}(x, t)=t^{\frac{1}{\alpha}} v(xt^{-\frac{1}{\alpha}}, 1)=:t^\frac{1}{\alpha} \Psi(xt^{-\frac{1}{\alpha}}).
      \label{eqn:SelfProfile}
    \end{equation}
The function $\Psi$ is called a \emph{self-similar profile}
and it satisfies the following equation:
$$
A\Psi(y) + \frac{1}{\alpha}\Psi(y) - \frac{1}{\alpha}y\cdot\nabla\Psi(y)
= N(\Psi(y)).
$$

The main objective of this paper is to study the stability of 
self-similar solutions under bounded (and small) perturbation of 
self-similar initial data. Our main results are given as follows:
    \begin{theorem}
      There exists an $\varepsilon>0$ such that if $u(x,t)$ is a global mild solution to \eqref{eqn:AbstractPDE} with perturbed self-similar initial data of 
$u_0(x)=v_0(x)+p(x)$ such that
$\|p\|_{L^\infty(\R^n)}<\infty$ and 
$\|\nabla v_0\|_{L^\infty(\R^n)}$, 
$\|\nabla p\|_{L^\infty(\R^n)} <\varepsilon$, 
then for any compact subset $K$ of $\R^n$, it holds that
      \begin{equation}
      	\lim_{t\rightarrow \infty}\left\|t^{-\frac{1}{\alpha}}u(t^{\frac{1}{\alpha}} x, t)-\Psi(x)\right\|_{C^k(K)}=0, \quad \forall k\in\mathbb{N}^+. \label{eqn:asymptotic}
      \end{equation}
      \label{thm:Main}
    \end{theorem}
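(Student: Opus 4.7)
The natural strategy is a compactness-uniqueness argument in the spirit of Asai-Giga, using the scale invariance of the equation and the Koch-Lamm global solutions to generate uniform bounds on a rescaled family. For each $\lambda > 0$, set
\[
u_\lambda(x,t) := \lambda^{-1} u(\lambda x,\, \lambda^\alpha t).
\]
Because \eqref{eqn:AbstractPDE} is scale invariant and $v_0$ is self-similar, $u_\lambda$ is a global mild solution with initial data $u_\lambda(x,0) = v_0(x) + \lambda^{-1} p(\lambda x)$. Evaluating at $t=1$ and writing $s = \lambda^\alpha$ gives $u_\lambda(x,1) = s^{-1/\alpha} u(s^{1/\alpha} x, s)$, so \eqref{eqn:asymptotic} is equivalent to showing $u_\lambda(\cdot,1) \to \Psi$ in $C^k_{\mathrm{loc}}(\R^n)$ as $\lambda \to \infty$.

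Since $\|\nabla u_\lambda(\cdot,0)\|_\infty \leq \|\nabla v_0\|_\infty + \|\nabla p\|_\infty < 2\varepsilon$ uniformly in $\lambda$, the Koch-Lamm theorem produces each $u_\lambda$ in their scale-invariant space with a uniform bound. The linearized estimates and parabolic smoothing (this is where I expect Lemma \ref{lemma:keylemma1} and Lemma \ref{lemma:keylemma2} to enter, controlling the nonlinear terms via \eqref{eqn:WFnonlinear}) should upgrade these to uniform $C^k$ bounds on $u_\lambda$ on every compact subset of $\R^n \times (0,\infty)$ and every $k$. By Arzelà-Ascoli, along some subsequence $\lambda_j \to \infty$, $u_{\lambda_j} \to u^\infty$ in $C^k_{\mathrm{loc}}(\R^n \times (0,\infty))$ with $u^\infty$ solving the PDE classically.

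To identify $u^\infty = v$, I pass to the limit in the Duhamel formula
\[
u_{\lambda_j}(x,t) = e^{-At} u_{\lambda_j}(\cdot,0)(x) + \int_0^t e^{-(t-s)A} N[u_{\lambda_j}](x,s)\,ds.
\]
The initial datum converges uniformly, $u_{\lambda_j}(\cdot,0) - v_0 = \lambda_j^{-1} p(\lambda_j\,\cdot) \to 0$ in $L^\infty$, while the Lipschitz norm stays bounded; continuity of the semigroup in the Koch-Lamm space then yields $e^{-At} u_{\lambda_j}(\cdot,0) \to e^{-At} v_0$. The uniform $C^2$ bounds together with the estimates \eqref{eqn:WFnonlinear} justify $N[u_{\lambda_j}] \to N[u^\infty]$ in the Duhamel integral. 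Hence $u^\infty$ is a mild solution with data $v_0$, and Koch-Lamm uniqueness forces $u^\infty = v$. Since every subsequence has a subsubsequence with the same limit, the full family $u_\lambda \to v$ in $C^k_{\mathrm{loc}}$, and setting $t = 1$ delivers \eqref{eqn:asymptotic}.

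The main obstacle is the passage to the limit at the initial time. The self-similar datum $v_0$ has linear growth, so $e^{-At} v_0$ does not decay and naive $L^\infty$-based approximation fails; one must work in the scale-invariant function space of Koch-Lamm, where the initial trace is attained only in a weak sense. A related subtlety is that the Arzelà-Ascoli compactness lives on compact subsets of $(0,\infty)$, while the Duhamel integral has a singular kernel at $s = t$ and must be controlled down to $s = 0$ uniformly in $\lambda_j$. The uniform-in-$\lambda$ Koch-Lamm estimates on the rescaled nonlinearity, together with the crude bounds \eqref{eqn:WFnonlinear}, are precisely what close this gap, and this is the technical heart of the argument.
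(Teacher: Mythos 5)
Your proposal is correct and follows essentially the same route as the paper: rescale to $u_\lambda$, use the uniform Koch--Lamm estimates \eqref{eqn:KLregularity} to get compactness, pass to the limit in the Duhamel formula (the initial perturbation dies because $\|\lambda^{-1}p(\lambda\,\cdot)\|_{L^\infty}\to 0$, which also dissolves your worry about the linear growth of $v_0$ since only the difference $p_\lambda$ must converge), and then identify the limit by a rigidity argument. The only cosmetic difference is that the paper works with $\Phi_\lambda=u_\lambda-v$ and shows $\Phi_\infty\equiv 0$ by running the Koch--Lamm contraction estimate (Lemma \ref{lemma:4.1}) on the difference equation, whereas you invoke Koch--Lamm uniqueness for the Cauchy problem with data $v_0$ directly; these are equivalent since that uniqueness is itself the same contraction estimate in $X_T$.
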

The next example demonstrates the validity of 
Theorem \ref{thm:Main}.
\begin{example}
        Consider a shifting perturbation on initial self-similar data 
$v_0(x)$ by $a\in \R^n$, i.e., $u_0(x)=v_0(x-a)$. In this case, $p(x)=u_0(x)-v_0(x)=v_0(x-a)-v_0(x)$, which satisfies the condition of Theorem \eqref{thm:Main}. In fact, {{      \begin{align*}
                &\|\nabla v_0\|_{L^\infty(\R^n)}+\|\nabla p\|_{L^\infty(\R^n)}\\
                \leq &\|\nabla v_0\|_{L^\infty(\R^n)}+\|\nabla(v_0(\cdot-a)-v_0(\cdot))\|_{L^\infty(\R^n)}\\
                \le &3\|\nabla v_0\|_{\infty(\R^n)}< 3\varepsilon,
        \end{align*}}}
         and         
        \begin{align*}
        \|p(x)\|_{L^\infty(\R^n)}&=\|v_0(x-a)-v_0(x)\|_{L^\infty(\R^n)}\\
        &\leq{{\|\nabla v_0\|_{L^\infty(\R^n)}}}|a|<\infty.
        \end{align*}
        From {{the}} uniqueness of the mild solution to \eqref{eqn:AbstractPDE}, we have $$u(x,t)=v(x-a, t)=t^\falpha \Psi\left( (x-a)t^{-\falpha} \right),$$then we can show
        \begin{align*}
          \lim_{t\RA\infty}\left\|t^{-\falpha}u(t^{\falpha}x, t)-\Psi(x)\right\|_{C^k(K)}    
          &=\lim_{t\RA\infty}\left\|\Psi\left( x-at^{-\falpha} \right)-\Psi(x)\right\|_{C^k(K)}\\
          &=0, \quad \forall k\in \mathbb{N}^+,
        \end{align*}
        since $\Psi$ is smooth. 
\end{example}

We also have the following result on the global convergence 
under perturbation with spatial decay.
\begin{theorem}[Global stability with spatial decay]
There exists an $\varepsilon>0$ such that if $u(x, t)$ is a global mild solution to \eqref{eqn:AbstractPDE} with perturbed self-similar initial data of 
$u_0(x)=v_0(x)+p(x)$ such that
$\|p\|_{L^\infty(\R^n)}<\infty$ and
$\|\nabla v_0\|_{L^\infty(\R^n)} +
\|(1+|x|^\beta)\nabla p\|_{L^\infty(\R^n)}
<\varepsilon$, 
then we have
  \begin{equation}
    \lim_{t\RA\infty}\left\|t^{-\falpha}u(t^{\falpha}x, t)-\Psi(x)\right\|_{C^1(\R^n)}=0.
    \label{eqn:GlobalConver}
  \end{equation}
  \label{thm:GlobalConver}
\end{theorem}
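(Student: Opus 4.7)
The plan is to upgrade the local conclusion of Theorem~\ref{thm:Main} to global $C^1$ convergence by combining it with a quantitative spatial equi-decay estimate on the perturbation. Let $v$ be the self-similar solution with initial datum $v_0$, so $v(x,t)=t^{1/\alpha}\Psi(xt^{-1/\alpha})$, and set $w:=u-v$. By linearity of the semigroup, $w$ is a mild solution
$$
w(x,t)=e^{-At}p(x)+\int_0^t e^{-(t-s)A}\bigl(N[u]-N[v]\bigr)(x,s)\,ds,
$$
and the rescaled perturbation satisfies $\tilde w(x,t):=t^{-1/\alpha}u(t^{1/\alpha}x,t)-\Psi(x)=t^{-1/\alpha}w(t^{1/\alpha}x,t)$, so that $\nabla\tilde w(x,t)=(\nabla w)(t^{1/\alpha}x,t)$. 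The goal is then to show $\tilde w(\cdot,t)\to 0$ in $C^1(\R^n)$.

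\textbf{Propagation of the weighted bound.} The core new ingredient is to show that the weighted Lipschitz decay of $p$ is propagated by the flow, i.e.\ that $w$ lies in a weighted Koch--Lamm-type space (such as the spaces $\YB$, $\YBB$, $\YBBB$ built on the weight $\XB$), with the a priori bound
$$
\bigl\|(1+|x|^\beta)\,\nabla w(\cdot,t)\bigr\|_{L^\infty(\R^n)}\;\lesssim\;\varepsilon
\qquad\text{uniformly in }t\ge 0.
$$
This is established by a fixed-point argument on the Duhamel formula above, relying on two ingredients: (i) weighted semigroup estimates showing that $e^{-At}$ maps the weighted $L^\infty$ space (with weight $1+|x|^\beta$) into itself with a gain of one or two spatial derivatives, obtained by splitting the heat or bi-Laplacian kernel into near-field and far-field pieces so that the polynomial weight is essentially preserved; and (ii) a bilinear/trilinear estimate on $N[u]-N[v]$, which, using the pointwise bounds \eqref{eqn:WFnonlinear} together with the smallness of $\|\nabla u\|_\infty+\|\nabla v\|_\infty$, can be written as a sum of terms of the form (smooth coefficient depending on $\nabla u$, $\nabla v$)\,$\cdot$\,($\nabla w$ or $\nabla^2 w$), closing the iteration in the weighted norm. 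These are precisely the contents of Lemmas~\ref{lemma:keylemma1} and~\ref{lemma:keylemma2}.

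\textbf{From equi-decay to global convergence.} With the weighted bound in hand, one has
$$
|\nabla\tilde w(x,t)|=|\nabla w(t^{1/\alpha}x,t)|\lesssim\frac{\varepsilon}{1+t^{\beta/\alpha}|x|^\beta}.
$$
Given $\eta>0$, choose $R$ large enough that $\varepsilon/(1+R^\beta)<\eta/2$, so that $\|\nabla\tilde w(\cdot,t)\|_{L^\infty(\{|x|\ge R\})}<\eta/2$ for all $t\ge 1$. On the compact set $\overline{B_R}$, Theorem~\ref{thm:Main} provides $\|\tilde w(\cdot,t)\|_{C^1(\overline{B_R})}\to 0$, hence it is below $\eta/2$ for $t$ sufficiently large. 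The two regions combine to give $\|\nabla\tilde w(\cdot,t)\|_{L^\infty(\R^n)}<\eta$. The $L^\infty$ component of the $C^1$ norm is easier: since $\|w(\cdot,t)\|_\infty\lesssim \|p\|_\infty$ by the Koch--Lamm theory, one has $\|\tilde w(\cdot,t)\|_\infty\lesssim t^{-1/\alpha}\|p\|_\infty\to 0$. This proves \eqref{eqn:GlobalConver}.

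\textbf{Main obstacle.} The delicate point is the weighted semigroup estimate for the fourth-order cases SD and WF: the bi-Laplacian heat kernel has only polynomial-type off-diagonal decay, so boundedness on $L^\infty$ with the weight $1+|x|^\beta$ requires a careful near/far decomposition and interpolation between weighted and unweighted bounds. Equally nontrivial is handling the cross terms in $N[u]-N[v]$, where the non-decaying factor $\nabla v$ appears multiplicatively and must be absorbed into $\varepsilon$ via the smallness of $\|\nabla v_0\|_\infty$ and the comparison principle structure built into the Koch--Lamm spaces.
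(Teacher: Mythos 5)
Your overall architecture --- a weighted equi-decay estimate controlling the far field combined with the local convergence of Theorem \ref{thm:Main} on a large ball --- is essentially the paper's strategy, and the splitting into $|x|\le R$ and $|x|\ge R$ for the gradient is sound. However, the propagation step is formulated in the wrong variables. You claim $\|(1+|x|^\beta)\nabla w(\cdot,t)\|_{L^\infty(\R^n)}\lesssim\varepsilon$ for the unrescaled difference $w=u-v$, uniformly over all $t>0$, and attribute this to Lemmas \ref{lemma:keylemma1} and \ref{lemma:keylemma2}. Those lemmas are finite-$T$ statements, and the weighted space $X_T^\beta$ is not scale invariant (the weight $1+|x|^\beta$ does not commute with the parabolic rescaling), so they cannot be applied on $(0,\infty)$ to $w$ directly. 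The paper instead applies them with $T=1$ to the rescaled family $\Phi_\lambda=u_\lambda-v$, obtaining $\|\Phi_\lambda\|_{X_1^\beta}\lesssim[p_\lambda]_\beta\le[p]_\beta$ uniformly in $\lambda>1$; in your notation this yields only $(1+|x|^\beta)\,|\nabla\tilde w(x,t)|\lesssim[p]_\beta$, i.e.\ the denominator is $1+|x|^\beta$, not $1+t^{\beta/\alpha}|x|^\beta$. Your stronger bound is unsubstantiated, but your argument only uses the weaker one, so this part is repairable once the fixed point is run on $\Phi_\lambda$ rather than on $w$.

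The more serious gap is the zeroth-order part of the $C^1$ norm. The inequality $\|w(\cdot,t)\|_{\infty}\lesssim\|p\|_\infty$ is false: the Duhamel term in $w$ grows like $\varepsilon\,t^{1/\alpha}$ in $L^\infty$ (see \eqref{eqn:MCFunifb1} and the $t^{1/4}$ bounds \eqref{N0-est}--\eqref{N2-est}, which after undoing the rescaling give $\|w(\cdot,t)\|_\infty\lesssim\|p\|_\infty+\varepsilon t^{1/\alpha}$), consistent with the fact that $u$ and $v$ themselves grow like $t^{1/\alpha}$. Hence your estimate only yields $\|\tilde w(\cdot,t)\|_\infty\lesssim t^{-1/\alpha}\|p\|_\infty+C\varepsilon$, which does not tend to zero. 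To conclude, you must treat $\tilde w$ itself the same way you treat its gradient: the weighted $L^\infty$ estimates $\sup_{0<t<T}\|(1+|x|^\beta)Sg\|_{L^\infty}\lesssim\|g\|_{Y_T^\beta}$ in Lemmas \ref{lemma:keylemma1} and \ref{lemma:keylemma2} show that the Duhamel part of $\Phi_\lambda(\cdot,1)$ is equi-decaying in $x$ uniformly in $\lambda$, while $\|e^{-A}p_\lambda\|_{L^\infty}\le\|p_\lambda\|_{L^\infty}=\lambda^{-1}\|p\|_{L^\infty}\to0$; the inner region is then handled by Theorem \ref{thm:Main}, or by the paper's Arzela--Ascoli argument identifying the uniform limit with $\Phi_\infty\equiv0$.
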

\begin{remark}
	It seems possible to also prove higher order global-in-space 
convergence results. The main technical step is to generalize Lemma \ref{lemma:keylemma1} and 
Lemma \ref{lemma:keylemma2} to higher order estimates. The work \cite{KochLamm}
uses analytic Banach fixed point theorem to obtain higher order regularity.
For the reason of conciseness and space, we omit this step in this paper.
\end{remark}

The following result (global well-posedness for initial data with
small Lipschitz norm) for \eqref{eqn:AbstractPDE} and the technique to prove 
it provide a starting point for our investigation. (The definition of the
function space $X_\infty$ will be given in Section \ref{sec:equidecay}.)
    \begin{theorem}[Koch--Lamm \cite{KochLamm}, Theorem 3.1 \& 5.1]
      There exists $\varepsilon>0$, $C>0$ such that for every $u_0$ with
$\|\nabla u_0\|_\infty<\varepsilon$ there exists an analytic solution $u\in X_\infty$ of \eqref{eqn:AbstractPDE} with $u(\cdot , 0)=u_0$ which satisfies $\left\|u\right\|_{X_\infty}\le C \|\nabla u_0\|_{L^\infty(\R^n)}$. The solution is unique in the ball $B^{X_\infty}_{C\varepsilon}(0):=\{u\in X_\infty|\|u\|_{X_\infty}\le C\varepsilon\}.$ Moreover, there exist $R>0$, $c>0$ such that for every $k\in \mathbb{N}_0$ and multi-index
      $\gamma\in \mathbb{N}^n_0$, we have the estimate
      \begin{equation}
        \sup_{x\in \R^n}\sup_{t>0}\left|
(t^{\frac{1}{\alpha}}\nabla)^\gamma 
(t\partial_t)^k \nabla u(x, t)\right|
\le c\|\nabla u_0\|_{L^\infty(\R^n)} R^{|\gamma|+k}(|\gamma|+k)!.
        \label{eqn:KLregularity}
      \end{equation}
Furthermore, $u$ depends analytically on $u_0$.
      \label{thm:KochLammRegu}
    \end{theorem}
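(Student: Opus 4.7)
The plan is to follow the fixed-point strategy of Koch--Tataru \cite{KochTataru}, as adapted to geometric flows in \cite{KochLamm}. First I would recast \eqref{eqn:AbstractPDE} in its Duhamel (mild) form
\[
u(x,t)=e^{-At}u_0(x)+\int_0^t e^{-(t-s)A}N[u](x,s)\,ds=:\Gamma[u](x,t),
\]
so that solving the equation is equivalent to finding a fixed point of $\Gamma$. The space $X_\infty$ should be chosen to be invariant under the critical scaling $u\mapsto \lambda^{-1}u(\lambda x,\lambda^\alpha t)$. A natural design combines the pointwise quantity $\sup_{t>0}\|\nabla u(\cdot,t)\|_{L^\infty}$ with parabolic Carleson-measure type norms controlling $\nabla^2 u$ (second order case) or $\nabla^2 u,\nabla^3 u,\nabla^4 u$ with appropriate powers of $t^{1/\alpha}$ (fourth order case); each component is chosen to have scaling dimension zero.

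The second step is to prove a linear and a multilinear estimate. The linear bound $\|e^{-At}u_0\|_{X_\infty}\lesssim\|\nabla u_0\|_{L^\infty}$ follows from standard kernel estimates for the heat and biharmonic semigroups together with a Carleson-type characterization of $L^\infty$. For the nonlinear piece I would use the crude bounds \eqref{eqn:WFnonlinear}, namely $|f_0[u]|\lesssim|\nabla^2u|^3$, $|f_1[u]|\lesssim|\nabla^2u|^2$, $|f_2[u]|\lesssim|\nabla^2u|$, together with the $w^{-2}P_2(\nabla u)$ factors in MCF and SD which supply an explicit smallness in $\|\nabla u\|_{L^\infty}$. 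Integrating the semigroup against $N[u]$, and exploiting the divergence form $\nabla_i f_1^i+\nabla_{ij}f_2^{ij}$ in the fourth order cases to place two derivatives on the kernel, one obtains an estimate of the shape
\[
\|\Gamma[u]-e^{-At}u_0\|_{X_\infty}\lesssim \|u\|_{X_\infty}^2+\|u\|_{X_\infty}^3.
\]
Taking $\|\nabla u_0\|_{L^\infty}<\varepsilon$ small, $\Gamma$ maps the ball $B^{X_\infty}_{C\varepsilon}(0)$ into itself and is a contraction, and Banach's fixed point theorem yields existence and uniqueness.

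For the analytic estimates \eqref{eqn:KLregularity}, I would iterate the fixed point in a scale of Banach spaces that weights $(t^{1/\alpha}\nabla)^\gamma(t\partial_t)^k u$ by $R^{-(|\gamma|+k)}(|\gamma|+k)!^{-1}$, and prove by induction on $|\gamma|+k$ that the desired bound reproduces itself under Duhamel. At each order the Leibniz rule distributes derivatives across the factors of $N[u]$; every factor is already controlled by the inductive hypothesis, and the combinatorial $(|\gamma|+k)!$ weight absorbs the number of distributions, leaving only a universal geometric growth in $R$. A cleaner alternative is to complexify the time and space variables and run the same fixed point in a space of holomorphic maps; the analytic implicit function theorem then simultaneously delivers analyticity in $(x,t)$ and analytic dependence on $u_0$.

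The main technical obstacle is the multilinear estimate. The terms $f_0,f_1,f_2$ mix derivatives of different orders weighted by various powers of $w^{-1}$, so $X_\infty$ must be designed so that the high-derivative factors carry enough short-time decay, supplied by semigroup smoothing, to make the time integral convergent, while the low-order $\nabla u$ factors remain uniformly bounded and small. Matching these parabolic weights so that the cubic Willmore term closes with a genuine smallness prefactor is the delicate point; the Carleson-measure structure embedded in $X_\infty$ is exactly what makes the balance work.
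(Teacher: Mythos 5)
You should first note that the paper you are looking at does not prove this statement at all: it is quoted verbatim as Theorem 3.1 \& 5.1 of Koch--Lamm \cite{KochLamm} and used as a black box, so the only fair comparison is with the cited work. Your sketch of the existence and uniqueness part is essentially the Koch--Tataru/Koch--Lamm route: Duhamel formulation, a scale-invariant space $X_\infty$ mixing $\sup_t\|\nabla u\|_{L^\infty}$ with parabolic Carleson-type norms, a linear estimate for $e^{-At}u_0$, and a quadratic/cubic multilinear estimate (the paper records this as Lemma \ref{lemma:4.1}, proved in \cite{KochLamm} via the auxiliary $Y_T$-spaces) closing a contraction on a small ball. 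Two caveats. First, the actual space for the fourth-order flows controls only $\nabla^2 u$ in a local $L^{n+6}$ norm on parabolic cylinders (see \eqref{eqn:WFXT}); putting $\nabla^3 u,\nabla^4 u$ into the norm, as you suggest as an option, is not what is done and would make the multilinear estimate needlessly hard --- the derivatives on $f_1,f_2$ are instead moved onto the kernel, exactly as you say later. Second, and more substantively, your primary proposal for the analytic estimate \eqref{eqn:KLregularity} --- a Leibniz induction on $|\gamma|+k$ with factorial weights through the Duhamel formula --- is not the argument of \cite{KochLamm} and is the weak point of your plan: closing factorial bounds by hand for these quasilinear nonlinearities is exactly the kind of bookkeeping this framework is designed to avoid, and you give no indication of how the induction constant $R$ survives the distribution of derivatives over the $w^{-2k}P_j(\nabla u)$ factors. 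What Koch--Lamm actually do is your ``cleaner alternative'': they insert scaling and translation parameters into the equation, solve the parameter-dependent fixed point problem by the analytic Banach fixed point (implicit function) theorem, and read off \eqref{eqn:KLregularity} together with analytic dependence on $u_0$ from analyticity in the parameters. If you intend to write the proof out, you should commit to that parameter trick rather than the direct induction.
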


Note that even though the estimate resembles those coming from linear 
parabolic equations and is consistent with the scale invariant property, 
it is highly nontrivial to establish for nonlinear equations. The fact 
that the estimates are expressed in terms of the
Lipschitz norm of the initial data is particularly useful as self-similar 
initial data is necessarily only Lipschitz. Furthermore,
note the following gradient bound for the solution ($\gamma=0, k=0$):
\begin{equation}
\|\nabla u(t)\|_{L^\infty(\mathbb{R}^n)}
\leq C\|\nabla u_0\|_{L^\infty(\mathbb{R}^n)}
\end{equation}
implies that the smallness of the Lipschitz norm is preserved in 
time. This fact is crucial if we want to work in the graph setting
because for surface diffusion, it has been shown by 
\cite{EMP} that in general the graph property might not be preserved.

For the rest of this section, we outline the strategy 
of the proof of Theorem \ref{thm:Main}. Such an approach is also described
in \cite[Chapter 1]{GigaBk} by M.-H. Giga, Y. Giga and J. Saal.
First, note that upon setting $\lambda=t^\falpha$, then
$u_\lambda(x,1) =u_{t^\falpha}(x, 1) = t^{-\falpha}u(xt^{\falpha}, t)$.
Hence
\eqref{eqn:asymptotic} is equivalent to
 \begin{align}\label{eqn:RescalingDifference}
  \lim_{\lambda\rightarrow\infty}\left\|u_\lambda(x, 1)-v_\lambda(x,1)\right\|_{C^k(K)}
   =\lim_{\lambda\rightarrow \infty}\left\|u_\lambda(x, 1)-v(x, 1)\right\|_{C^k(K)}
   =0, \quad \forall k\in\mathbb{N}^+.
 \end{align}
Thus all we need is to estimate at time $t=1$ the difference between the two 
solutions $u_\lambda$ and $v_\lambda \equiv v$.
Now let $\Phi_\lambda:=u_\lambda-v$. Then it satisfies
\begin{equation}
  \Phi_\lambda(x, t)=e^{-At}p_\lambda(x)+\int_{0}^{t}e^{-(t-s)A}(N[v+\Phi_\lambda]-N[v])(x, s)ds, 
  \label{eqn:DifferenceEquation}
\end{equation}
where we have used the fact that the difference between the
initial data is given by 
$u_\lambda(x,0) - v(x,0) 
= \frac{1}{\lambda}p(\lambda x) 
:= p_\lambda(x)$. 

Next, the following estimate from Theorem \ref{thm:KochLammRegu} is 
applicable to both $u_\lambda$ and $v$: 
 \begin{equation}
   |\nabla^{{\gamma}}\partial_t^k \nabla u(x,t)|\leq C t^{-\left( \frac{|{{\gamma}}|}{\alpha}+k \right)}\left\|\nabla u_0\right\|_{L^\infty(\R^n)}.
   \label{eqn:Regularity}
 \end{equation}
Putting \eqref{eqn:Regularity} and \eqref{eqn:DifferenceEquation} together, we can apply Arzela--Ascoli compactness theorem to show that
there is a subsequence
$\{\Phi_{\lambda_k}\}, \lambda_k\rightarrow \infty$ 
and $\Phi_\infty\in C^\infty(\R^n\times(0,1])$ such that 
the following statements hold.
\begin{enumerate}
		\item (Convergence) For any compact subset $K$ of $\R^n$, 
	\begin{equation}
	\lim_{\lambda_{k}\rightarrow\infty} \left\|\Phi_{\lambda_{k}}(x, 1)-\Phi_\infty(x, 1)\right\|_{C^k(K)}=0, \quad \forall k\in\mathbb{N}.
	\label{eqn:convergence1}
	\end{equation}
	\item (Regularity) For any $t\in (0, 1]$, 
	\begin{equation}
	\|\nabla^{{\gamma}}\partial_t^k \nabla\Phi_\infty(\cdot,t)\|_{L^\infty(\R^n)}\leq C t^{-\left( \frac{|{{\gamma}}|}{\alpha}+k \right)}({{\|\nabla v_0\|_{L^\infty(\R^n)}+\|\nabla p\|_{L^\infty(\R^n)}}}).
	\label{eqn:PhiRegularity}
	\end{equation} 
	\item (Integral equation) $\Phi_{\infty}(x,t)$ solves the following integral equation:
	\begin{equation}
	\Phi_\infty(x, t)=\int_{0}^{t}e^{-(t-s)A}(N[v+\Phi_\infty]-{{N[v]}})(x, s)ds, \quad (x,t)\in \R^n\times(0,1]. 
	\label{eqn:fixpointequation}
	\end{equation}
\end{enumerate}
As the last step, we conclude the proof of \eqref{eqn:RescalingDifference} by 
showing that every solution $\Phi_\infty$ of \eqref{eqn:fixpointequation} 
satisfying the property $\|\nabla \Phi_\infty\|_{L^\infty(\mathbb{R}^n)}\ll 1$ 
and the regularity estimate \eqref{eqn:PhiRegularity}
must be equal to $0$.

We would like to emphasize that the above strategy is very simple and 
robust. See again \cite{GigaBk} for a general exposition of this strategy.
Despite the fact that the results are restricted to the graph
setting, it is applicable to all the geometric evolutions under 
consideration here. Another advantage is that maximum or comparison principle 
is not used in the current approach. See for example the results for MCF
\cite{stavrou, clutterbuckSchnurer, cesaroniNovaga} that do rely on 
such a principle.

As a last remark before presenting the proof, note that WF has one more 
term, $f_0[u]$, than SD. 
Thus in the current work, we will only consider MCF and WF for simplicity. 

\section{Stability Result - Local Version}\label{sec:localstability}
In this section, we will prove Theorem \ref{thm:Main}. 
As outlined above, we
will first establish uniform estimates and compactness of $\Phi_\lambda$.
In all of the following result, we are working in the regime of {\em small 
Lipschitz norm}. More precisely, there exist an $\epsilon \ll 1$ such that
$
\|\nabla u_0\|_{L^\infty(\mathbb{R}^n)},
\|\nabla v_0\|_{L^\infty(\mathbb{R}^n)}
\ll 1$.

\subsection{MCF} 
In this case, we have $A=-\Delta$, $\alpha=2$. Thus equation 
\eqref{eqn:DifferenceEquation} for $\Phi_\lambda$ becomes
\begin{equation}
  \Phi_\lambda(x, t)=e^{{{t\Delta}} }p_\lambda(x)+\int_{0}^{t}e^{(t-s)\Delta}\left( N[u_\lambda]-N[v] \right)(x, s)ds.
  \label{eqn:MCFdifference}
\end{equation}
The nonlinear term $N[u]$ can be estimated as:
\begin{equation}\label{nonlin-est-MCF}
|N[u]| 
 =(1+|\nabla u|^2)^{-1}\nabla u\star \nabla u\star \nabla^2 u
\lesssim |\nabla u|^2|\nabla^2 u| \lesssim |\nabla^2 u|.
\end{equation}
We also recall the heat kernel and its associated semigroup:
\begin{equation}\label{heat.kernel}
h(x,t):=\frac{1}{(4\pi t)^{\frac{n}{2}}}\exp\left(-\frac{|x|^2}{4t}\right),
\,\,\,\,\,\,\text{and}\,\,\,\,\,\,e^{\Delta t}f(x):=\int_{\R^n}h(x-y, t)f(y)dy.
\end{equation}

\subsubsection{Uniform estimates and Compactness for $\Phi_\lambda$}
We first note several useful facts. 
By the $L^1$-bound of the heat kernel, we get
 \begin{equation}
\sup_{\lambda>1}\sup_{t\geq 0}\left\|e^{\Delta t}p_\lambda(x)\right\|_{L^\infty(\R^n)}\leq \sup_{\lambda>1}\left\|p_\lambda(x)\right\|_{L^\infty(\R^n)}<\infty.\label{eqn:plambdabd}
\end{equation}
Furthermore, the Lipschitz norm is invariant under the rescaling:
\begin{equation}
\left\|\nabla p_\lambda\right\|_{L^\infty(\R^n)}
=\left\|\nabla p\right\|_{L^\infty(\R^n)}.
\end{equation}
From the regularity estimate \eqref{eqn:Regularity}, we have
 \begin{align} 
   \nonumber
   \sup_{\lambda>1}\left\|\partial_t^k\nabla^{{{\gamma}}}\nabla u_\lambda(\cdot , t)\right\|_{L^\infty(\R^n)}
   \lesssim& t^{-\frac{|{{\gamma}}|}{2}-k}{{\sup_{\lambda>1}}}\left\|\nabla(v_0+p_\lambda)\right\|_{L^\infty(\R^n)}\\
   \lesssim& t^{-\frac{|{{\gamma}}|}{2}-k}({{\|\nabla v_0\|_{L^\infty(\R^n)}+\|\nabla p\|_{L^\infty(\R^n)}}})  
\label{eqn:uniformregularity1}
 \end{align}
and similarly for $v_\lambda = v$,
\begin{equation}\label{v.est}
\sup_{\lambda>1}\left\|\partial_t^k\nabla^{{{\gamma}}}\nabla v_\lambda(\cdot , t)\right\|_{L^\infty(\R^n)}\\
   \lesssim t^{-\frac{|{{\gamma}}|}{2}-k}\|\nabla v_0\|_{L^\infty(\R^n)}.
\end{equation}

Now we estimate
 \begin{align} 	\label{eqn:MCFunifb1}
 	&\sup_{\lambda>1}\|\Phi_{\lambda}(\cdot, t)\|_{L^\infty(\R^n)}\\
 \leq& \sup_{\lambda>1}\|e^{\Delta t}p_\lambda(\cdot)\|_{L^\infty (\R^n)}+\sup_{\lambda>1}\left\|\int_{0}^t e^{-(t-s)\Delta}(N[u_\lambda]-N[v])(\cdot,s)ds\right\|_{L^\infty(\R^n)}\nonumber\\
 	\lesssim &\sup_{\lambda>1}\|e^{\Delta t}p_\lambda(\cdot)\|_{L^\infty (\R^n)}+\sup_{\lambda>1}\left\|\int_0^t \int_{\R^n}|h(\cdot -y,t-s)|(|N[u_\lambda]|+|N[v]|)(y,s)dyds\right\|_{L^\infty(\R^n)}\nonumber\\
 \lesssim& \sup_{\lambda >1}\|p_\lambda\|_{L^\infty(\R^n)}+({{\|\nabla v_0\|_{L^\infty(\R^n)}+\|\nabla p\|_{L^\infty(\R^n)}}})\left\|\int_0^t\int_{\R^n} h(\cdot-y, t-s) s^{-\frac{1}{2}}dyds\right\|_{L^\infty(\R^n)}\nonumber\\	
\lesssim& 
\sup_{\lambda >1}\|p_\lambda\|_{L^\infty(\R^n)}+
({{\|\nabla v_0\|_{L^\infty(\R^n)}+\|\nabla p\|_{L^\infty(\R^n)}}})\nonumber\\
<&\infty.\nonumber
 \end{align}
In the above, we have used the estimate 
\begin{equation}
\|N[u_\lambda(\cdot, s)]\|_{L^\infty(\R^n)}
\lesssim
\|\nabla u_\lambda(\cdot, s)\|^2_{L^\infty(\R^n)}\|\nabla^2 u_\lambda(\cdot, s)\|_{L^\infty(\R^n)}
\lesssim
s^{-\frac{1}{2}}
\label{nonlin-est-MCF-t}
\end{equation}

 By the higher order regularity estimate 
 \eqref{eqn:uniformregularity1} and \eqref{v.est}, 
we have for any $k\in \mathbb{N}, \gamma\in \mathbb{N}^n$,
 \begin{align}\label{eqn:MCFunifb2}
\sup_{\lambda>1}\|\nabla^{{\gamma}}\partial_t^k \nabla\Phi_\lambda(\cdot,t)\|_{L^\infty(\R^n)}
\lesssim&\sup_{\lambda>1}\|\nabla^{\gamma}\pa_t^k\nabla u_\lambda(\cdot,t)\|_{L^\infty(\R^n)}+\|\nabla^\gamma \pa_t^k \nabla v(\cdot, t)\|_{L^\infty(\R^n)}\nonumber\\
\lesssim &({{\|\nabla v_0\|_{L^\infty(\R^n)}+\|\nabla p\|_{L^\infty(\R^n)}}}) t^{-\frac{|{{\gamma}}|}{2}-k}.\nonumber
 \end{align} 

With the above uniform estimates for $\Phi_\lambda$, we can apply
the Arzela--Ascoli theorem 
to extract a subsequence $\left\{ \Phi_{\lambda_k} \right\}$ and 
$\Phi_\infty(x, t)\in C^\infty({\color{black}\R^n\times(0,1]})$ 
such that for any $\delta>0$, compact subset $K\subset\R^n$,
and $k\in \mathbb{N}$, we have
\begin{equation}
	\lim_{\lambda_k\rightarrow\infty}\sup_{\delta\leq t\leq 1}
	\|\Phi_{\lambda_k}-\Phi_{\infty}\|_{C^k(K)}=0.
\label{eqn:convergence2}
\end{equation}
Then \eqref{eqn:convergence1} and \eqref{eqn:PhiRegularity} follow.

\subsubsection{Equation for $\Phi_\infty$}
Here we verify \eqref{eqn:fixpointequation} by passing the limit $\lambda_k\RA\infty$ in \eqref{eqn:MCFdifference}. First note that
\begin{equation}
\lim_{\lambda\rightarrow \infty}\sup_{t\geq 0}\|e^{\Delta t}p_\lambda(\cdot)\|_{L^\infty(\R^n)}\leq \lim_{\lambda\rightarrow \infty}\|p_\lambda\|_{L^\infty(\R^n)}={\color{black}\lim_{\lambda\to\infty}\frac{1}{\lambda}\|p\|_{L^\infty(\R^n)}}=0.
\label{eqn:plambdaconvMCF}
\end{equation}
Second, from \eqref{eqn:convergence2}, we know that 
for any $\delta>0$ and any compact subset $K\subset\R^n$, 
\begin{equation}
\lim_{\lambda_k\rightarrow\infty}\sup_{\delta\leq t\leq 1}\|N[v+\Phi_{\lambda_k}]-N[v+\Phi_{\infty}]\|_{C^k(K)}=0.
\label{eqn:nonlinearconver1}
\end{equation}
Now note that
\begin{align*}
& \left|\int_{0}^{t}e^{(t-s)\Delta}(N[\Phi_\lambda+v]-N[\Phi_\infty+v])(x,s)ds\right|\\
\leq & \int_{0}^{t}\int_{\mathbb{R}^n} h(t-s, x-y)
\Big[|N[\Phi_\lambda+v]|+|N[\Phi_\infty+v]|\Big](y,s)\,dyds.
\end{align*}
By the formula of the heat kernel \eqref{heat.kernel} and 
the estimate for the nonlinear term \eqref{nonlin-est-MCF-t},
the integrand can be estimated as:
$$
h(t-s, x-y)
\Big[|N[\Phi_\lambda+v]|+|N[\Phi_\infty+v]|\Big](y,s)
\lesssim
(t-s)^{-\frac{n}{2}}
\exp\left( -\frac{|x-y|^2}{4(t-s)}\right)s^{-\frac{1}{2}}
$$
which is integrable:
$$
\int_0^t\int_{\mathbb{R}^n}
(t-s)^{-\frac{n}{2}}
\exp\left( -\frac{|x-y|^2}{4(t-s)}\right)s^{-\frac{1}{2}}\,dy\,ds
\lesssim t^{\frac{1}{2}}.
$$
Hence \eqref{eqn:fixpointequation} follows by the Lebesgue Dominated 
Convergence Theorem.

\subsection{WF} 
In this case, we have $A=\Delta^2, \alpha=4$, and 
 $$N[u]=f_0[u]+\nabla_i f_1^i[u]+\nabla^2_{ij} f_2^{ij}[u].$$
First we introduce the heat kernel of biharmonic operator $b(x,t)$:
 $$b(x,t)=t^{-\frac{n}{4}}g\left( \frac{x}{t^{\frac{1}{4}}} \right),
 \quad\text{where}\quad
 g(\xi)=(2\pi)^{-\frac{n}{2}}\int_{\R^n}e^{i\xi\cdot k-|k|^4} dk, \quad \xi\in \R^n.$$ 
 Furthermore, 
it satisfies the following decaying estimates 
(see \cite[Chapter 9, Theorem 7]{friedman2008partial}, \cite{KochLamm}) which play a very important role in this 
paper:
 \begin{eqnarray}
   |b(x,t)| 
& \lesssim & t^{-\frac{n}{4}}\exp\left( -C\frac{|x|^{\frac{4}{3}}}{t^{\frac{1}{3}}} \right), 
   \label{eqn:KerDecay1}\\
   |\nabla^k b(x,t)|
& \lesssim & t^{-\frac{n+k}{4}}{{\exp\left(-C_k\frac{|x|^{\frac{4}{3}}}{t^{\frac{1}{3}}}\right)}},\quad \forall k\geq 1, 
   \label{eqn:KerDecay2}
 \end{eqnarray}
 The integral equation for mild solutions $u(x, t)$ to \eqref{eqn:GrapWF} now reads
 \begin{align}   \label{eqn:WFmild}
   u(x,t)&=\int_{\R^n}b(x-y,t)u_0(y)dy+\int_{0}^{t}\int_{\R^n}b(x-y,t-s)f_0[u](y,s)dyds\\
   &\qquad-\int_{0}^{t}\int_{\R^n}\nabla_i b(x-y,t-s)f_1^i[u](y,s)dyds\nonumber\\
   &\qquad+\int_{0}^{{{t}}}\int_{\R^n}\nabla^2_{ij}b(x-y,t-s)f_2^{ij}[u](y,s)dyds.\nonumber
 \end{align}
Given the uniform bound for $\|\nabla u\|_{L^\infty(\R^n)} \lesssim 1$, 
we note here the estimates for the nonlinear structures:
\begin{equation}
|f_0[u]| \lesssim |\nabla^2 u|^3 \lesssim t^{-\frac{3}{4}},\,\,\,\,\,\,
|f_1[u]| \lesssim |\nabla^2 u|^2 \lesssim t^{-\frac{2}{4}},\,\,\,\,\,\,
|f_2[u]| \lesssim |\nabla^2 u| \lesssim t^{-\frac{1}{4}}.
\label{nonlin.struct.est.WF}
\end{equation}
Note also that in order to take advantage of the kernel decay, we perform 
integration by parts to eliminate the derivatives on $f_1$ and $f_2$. With 
this, we use the following $L^1$-bound for $b$,
\begin{equation}
\|\nabla^k b(\cdot, t)\|_{L^1(\mathbb{R}^n)} \lesssim t^{-\frac{k}{4}}
\quad\text{for $k=0, 1, 2$}.
   \label{eqn:KerDecay3}
\end{equation}

\subsubsection{Uniform estimates and convergence for $\Phi_\lambda$}
Using the estimates for $b$, we first establish $L^\infty$ bound for 
$\Phi_\lambda$. For $e^{-\Delta^2 t}p_\lambda$, we have
 \begin{align}
   \sup_{\lambda>1}\sup_{t\geq 0}\left\|e^{-\Delta^2 t}p_\lambda\right\|_{L^\infty}&=\sup_{\lambda>1}\sup_{t\geq 0}\left\|\int_{\R^n}b(\cdot-y,t)p_\lambda(y)dy\right\|_{L^\infty}\nonumber\\ &\leq\sup_{\lambda>1}\sup_{t\geq 0}\left\|p_\lambda\right\|_{L^\infty}\left\|b(\cdot , t)\right\|_{L^1}\nonumber\\
   &\lesssim \sup_{\lambda >1}\left\|p_\lambda\right\|_{L^\infty}<\infty.
\label{WFp}
 \end{align}
 From the regularity estimates \eqref{eqn:Regularity} we have
 \begin{align}   
   \nonumber
   \sup_{\lambda>1}\left\|\partial_t^k\nabla^{{\gamma}}\nabla u_\lambda(\cdot , t)\right\|_{L^\infty}
\lesssim&  t^{-\frac{|{{\gamma}}|}{4}-k}{{\sup_{\lambda>1}}}\left\|\nabla(v_0+p_\lambda)\right\|_{L^\infty}\\
   \lesssim& {{t^{-\frac{|\gamma|}{4}-k}\left(\|\nabla v_0\|_{L^\infty(\R^n)}+\sup_{\lambda>1}\|\nabla p_\lambda\|_{L^\infty(\R^n)}\right)}}\nonumber\\
   \lesssim& t^{-\frac{|{{\gamma}}|}{4}-k}{\color{black}(\|v_0\|_{L^\infty(\R^n)}+\|\nabla p\|_{L^\infty(\R^n)})}
 \label{eqn:uniformregularity2}
 \end{align}
and similarly for $v_\lambda = v$,
\begin{equation}
\sup_{\lambda>1}\left\|\partial_t^k\nabla^{{\gamma}}\nabla v_\lambda(\cdot , t)\right\|_{L^\infty}
   \lesssim
t^{-\frac{|{{\gamma}}|}{4}-k}\|v_0\|_{L^\infty(\R^n)}.
\label{v2.est}
\end{equation}

For the $L_\infty$-estimate for $\Phi_\lambda$, we combine
 \eqref{eqn:KerDecay1}, \eqref{eqn:KerDecay2}, \eqref{eqn:uniformregularity2} and \eqref{eqn:WFnonlinear} to give
 \begin{align} \label{eqn:WFunibnd}
  \sup_{\lambda>1}\left\|\Phi_\lambda(\cdot , t)\right\|_{L^\infty(\R^n)}
   \leq& \sup_{\lambda>1}\left\|e^{-\Delta^2 t}p_\lambda(\cdot )\right\|_{L^\infty(\R^n)}\nonumber\\
   &+\sup_{\lambda>1}\left\|\int_{0}^{t}\int_{\R^n}b(\cdot-y,t-s)(f_0[u_\lambda]-f_0[v])(y,s)dyds\right\|_{L^\infty(\R^n)}\nonumber\\
   &+\sup_{\lambda>1}\left\|\int_{0}^{t}\int_{\R^n}\nabla_i b(\cdot -y, t-s)(f_1^i[u_\lambda]-f_1^i[v])(y,s)dyds\right\|_{L^\infty(\R^n)}\nonumber\\&+\sup_{\lambda>1}\left\|\int_{0}^{t}\int_{\R^n}\nabla_{ij}b(\cdot -y, t-s)(f_2^{ij}[u_\lambda]-f_2^{ij}[v])dyds\right\|_{L^\infty(\R^n)}\nonumber\\
   \le&\sup_{\lambda>1}\left\|e^{-\Delta^2 t}p_\lambda(\cdot )\right\|_{L^\infty(\R^n)}\nonumber\\
   &+\sup_{\lambda>1}\left\|\int_{0}^{t}\int_{\R^n}|b(\cdot-y,t-s)|(|f_0[u_\lambda]|+|f_0[v]|)(y,s)dyds\right\|_{L^\infty(\R^n)}\nonumber\\
   &+\sup_{\lambda>1}\left\|\int_{0}^{t}\int_{\R^n}|\nabla_i b(\cdot -y, t-s)|(|f_1^i[u_\lambda]|+|f_1^i[v]|)(y,s)dyds\right\|_{L^\infty(\R^n)}\nonumber\\&+\sup_{\lambda>1}\left\|\int_{0}^{t}\int_{\R^n}|\nabla_{ij}b(\cdot -y, t-s)|(|f_2^{ij}[u_\lambda]|+|f_2^{ij}[v]|)(y,s)dyds\right\|_{L^\infty(\R^n)}\nonumber
\end{align}
Now we make use of the structure for the nonlinear terms 
\eqref{nonlin.struct.est.WF} 
together with the kernel and regularity estimates 
\eqref{eqn:KerDecay3}, \eqref{eqn:uniformregularity2} and \eqref{v2.est}, 
we have
\begin{eqnarray}
& & \left\|\int_{0}^{t}\int_{\R^n}|b(\cdot-y,t-s)|(|f_0[u_\lambda]|+|f_0[v]|)(y,s)dyds\right\|_{L^\infty(\R^n)}\nonumber\\
& \lesssim & 
\int_{0}^{t}\int_{\R^n}|b(y,t-s)|s^{-\frac{3}{4}}\,dy\,ds
\lesssim \int_0^t s^{-\frac{3}{4}}\,ds \lesssim t^{\frac{1}{4}};
\label{N0-est}
\end{eqnarray}
\begin{eqnarray}
& & \left\|\int_{0}^{t}\int_{\R^n}
|\nabla_i b(\cdot-y,t-s)|
(|f_1^i[u_\lambda]|+|f_1^i[v]|)(y,s)dyds\right\|_{L^\infty(\R^n)}\nonumber\\
& \lesssim & 
\int_{0}^{t}\int_{\R^n}|\nabla b(y,t-s)|s^{-\frac{2}{4}}\,dy\,ds\nonumber\\
& \lesssim & \int_{0}^{t}(t-s)^{-\frac{1}{4}}s^{-\frac{2}{4}}\,ds
= t^{\frac{1}{4}}
\int_{0}^{1}(1-s)^{-\frac{1}{4}}s^{-\frac{2}{4}}\,ds \lesssim t^{\frac{1}{4}}
\label{N1-est}
\end{eqnarray}
\begin{eqnarray}
& & \left\|\int_{0}^{t}\int_{\R^n}
|\nabla_{ij} b(\cdot-y,t-s)|
(|f_2^{ij}[u_\lambda]|+|f_2^{ij}[v]|)(y,s)dyds\right\|_{L^\infty(\R^n)}\nonumber\\
& \lesssim & 
\int_{0}^{t}\int_{\R^n}|\nabla^2 b(y,t-s)|s^{-\frac{1}{4}}\,dy\,ds\nonumber\\
& \lesssim & \int_{0}^{t}(t-s)^{-\frac{2}{4}}s^{-\frac{1}{4}}\,ds
= t^{\frac{1}{4}}
\int_{0}^{1}(1-s)^{-\frac{2}{4}}s^{-\frac{1}{4}}\,ds \lesssim t^{\frac{1}{4}}.
\label{N2-est}
\end{eqnarray}
Hence we have
$$
\sup_{\lambda>1}\left\|\Phi_\lambda(\cdot , t)\right\|_{L^\infty(\R^n)}
\lesssim 
\sup_{\lambda >1}\left\|p_\lambda\right\|_{L^\infty}
+ t^{\frac{1}{4}}
\left({{\|\nabla v_0\|_{L^\infty(\R^n)}+\|\nabla p\|_{L^\infty(\R^n)}}}\right)
< \infty.
$$

For higher order regularity estimates, by \eqref{eqn:uniformregularity2}, we have
  \begin{equation}
 \sup_{\lambda>1}\|\nabla^{{\gamma}}\partial_t^k \nabla\Phi_\lambda(\cdot,t)\|_{L^\infty(\R^n)}\leq C t^{-\frac{|{{\gamma}}|}{4}-k}({{\|\nabla v_0\|_{L^\infty(\R^n)}+\|\nabla p\|_{L^\infty(\R^n)}}}).
 \label{eqn:WFunifb2}
 \end{equation} 

 As in the MCF case, we  apply the Arzela--Ascoli theorem to 
extract a subsequence $\left\{ \Phi_{\lambda_k} \right\}$ and $\Phi_\infty(x, t)\in C^\infty({\color{black}\R^n\times(0,1]})$ such that for any $\delta>0$ and any compact subset $K$ of $\R^n$, 
 \begin{equation}
   \lim_{\lambda_k\rightarrow \infty}\sup_{\delta\leq t\leq 1}\left\|\Phi_{\lambda_k}(\cdot , t)-\Phi_\infty(\cdot , t)\right\|_{C^k(K)}=0, \quad \forall k\in \mathbb{N}^+,
   \label{eqn:WFuniformconver}
 \end{equation}
 and $\Phi_\infty$ satisfies the regularity estimate \eqref{eqn:PhiRegularity}. 

\subsubsection{Equation for $\Phi_\infty$}
Here we check that $\Phi_{\infty}$ satisfies \eqref{eqn:fixpointequation}. 
The strategy is similar to the MCF case. 

Recall that $\Phi_\lambda$ satisfies the following identity:
 \begin{align}
  \Phi_\lambda(x,t)
   &=e^{-\Delta^2 t}p_\lambda(x)+\int_{0}^{t}\int_{\R^n}b(x-y, t-s)(f_0[\Phi_\lambda+v]-f_0[v])(y,s)dyds\nonumber\\
   &\qquad-\int_{0}^{t}\int_{\R^n}\nabla_i b(x-y, t-s)(f_1^i[\Phi_\lambda+v]-f_1^i[v])(y,s)dyds\nonumber\\
   &\qquad+\int_{0}^{{{
   }}}\int_{\R^n}\nabla^2_{ij}b(x-y, t-s)(f_2^{ij}[\Phi_\lambda+v]-f_2^{ij}[v])dyds.  
\label{eqn:WFdifferenceeq}
 \end{align}

First, by the $L^1$-bounded of $b(\cdot ,t)$, 
similar to \eqref{eqn:plambdaconvMCF}, we have
 \begin{equation}
   \lim_{\lambda\rightarrow \infty}\|e^{-\Delta^2 t}p_\lambda(\cdot )\|_{L^\infty(\R^n)}\lesssim \lim_{\lambda\RA\infty}\left\|p_\lambda\right\|_{L^\infty(\R^n)}\lesssim {\color{black}\lim_{\lambda\to\infty}\frac{1}{\lambda}\|p\|_{L^\infty(\R^n)}}=0. \label{eqn:plambdaconvWF}
\end{equation}

Second, similar to the previous computations, 
in particular, the derivations of 
\eqref{N0-est}, \eqref{N1-est}, \eqref{N2-est},
the integrals of the nonlinear terms are all bounded by integrands
that are integrable with bounds independent of $\lambda$. 
Hence, \eqref{eqn:fixpointequation} follows from the 
Lebesgue Dominated Convergence Theorem.
We emphasize here again the crucial use of the estimates 
\eqref{nonlin.struct.est.WF} for the nonlinear terms and 
the $L^1$-bounds \eqref{eqn:KerDecay3} for the derivatives of the 
bi-harmonic heat kernel.

\subsection{Proof of $\Phi_\infty=0$}\label{PhiInfty0}
In this section, we will show that the integral equation 
\eqref{eqn:fixpointequation} only admits the zero solution among the class
of functions with small Lipschitz norm. 
This follows from a fixed point type argument.

Motivated by the translation and scaling invariance of the equation,  
the following functions space was introduced in \cite{KochLamm}.
Let  $T>0$.
\begin{enumerate}
\item For MCF with $\alpha=2$, 
  \begin{multline}
    X_T:=\Big\{f(x,t):\R^n\times(0, T)\RA\R \Big|
    \left\|f\right\|_{X_T}:=\sup_{0<t<T}\left\|\nabla f(\cdot , t)\right\|_{L^\infty(\R^n)}\\
    +\sup_{x\in \R^n}\sup_{0<R^2<T}R^{\frac{2}{n+4}}\left\|\nabla^2 f\right\|_{L^{n+4}(B_R(x)\times(R^2/2, R^2))}<\infty. \Big\}
    \label{eqn:MCFXT}
  \end{multline}
\item For WF with $\alpha=4$, 
    \begin{multline}
  X_T:=\Big\{f(x,t):\R^n\times(0, T)\RA\R \Big|\left\|f\right\|_{X_T}=\sup_{0<t<T}\left\|\nabla f(\cdot , t)\right\|_{L^\infty(\R^n)}\\
  +\sup_{x\in \R^n}\sup_{0<R^4<T}R^{\frac{2}{n+6}}\left\|\nabla^2 f\right\|_{L^{n+6}(B_R(x)\times(R^4/2, R^4))}<\infty. \Big\}
  \label{eqn:WFXT}
  \end{multline}
\end{enumerate}
Note that the above norms are scale invariant:
$$
\|f_\lambda\|_{X_T} = \|f\|_{X_{\lambda^{\alpha}T}}
\quad\text{and}\quad
\|f_\lambda\|_{X_\infty} = \|f\|_{X_\infty}.
$$

We then have the following estimate.
\begin{lemma}[Koch--Lamm \cite{KochLamm} Lemma 3.10 and 5.2]\label{lemma:4.1}
For any $0<T\le\infty$ and $0<\delta<1$ there exists $C(\delta)>0$ s.t. 
for every $g_1, g_2\in B_{\delta}^{X_T}(0):=\{g\in X_T|\|g\|_{X_T}\le \delta\}$, we have 
\begin{multline} 
\left\|\int_{0}^{T}e^{-(T-s)A}N[g_1](x,s)ds
-\int_{0}^{T}e^{-(T-s)A}N[g_2](x,s)ds\right\|_{X_T}\\
\leq C(\delta)
(\left\|g_1\right\|_{X_T} +\left\|g_2\right\|_{X_T})
\left\|g_1-g_2\right\|_{X_T}.  
\label{eqn:MCFcontraction}
\end{multline}
\end{lemma}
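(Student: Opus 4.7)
The $X_T$-norm has two components, the $L^{\infty}$ norm of the gradient and a parabolic Morrey-type $L^{n+4}$ (MCF) or $L^{n+6}$ (WF) norm of the second derivative, so I need to estimate both pieces of the integral $I[g_1,g_2] := \int_{0}^{T}e^{-(T-s)A}(N[g_1]-N[g_2])(x,s)\,ds$. My overall strategy is to reduce to a bilinear estimate of the form $\|I[g_1,g_2]\|_{X_T}\lesssim (\|g_1\|_{X_T}+\|g_2\|_{X_T})\|g_1-g_2\|_{X_T}$ in the Koch--Tataru--Lamm style: combine an algebraic decomposition of $N[g_1]-N[g_2]$ with semigroup kernel estimates for the gradient component and parabolic $L^p$ maximal regularity for the second-derivative component. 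The key gain is that one factor from $\|g_1\|_{X_T}+\|g_2\|_{X_T}$ comes from the coefficients/prefactors of the polynomial nonlinearity, while the factor of $\|g_1-g_2\|_{X_T}$ comes from a single occurrence of $\nabla(g_1-g_2)$ or $\nabla^2(g_1-g_2)$.

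The first technical step is the algebraic decomposition. Each $f_j$ is a polynomial in $\nabla^2 u$ with coefficients smoothly depending on $\nabla u$ through $w^{-2k}=(1+|\nabla u|^2)^{-k}$, and since $\|\nabla g_i\|_{L^\infty}\le\delta<1$ these coefficients are uniformly bounded with bounded derivatives. The fundamental theorem of calculus then telescopes each $f_j[g_1]-f_j[g_2]$ into the schematic form
\begin{equation*}
f_j[g_1]-f_j[g_2] = A_j\star\nabla^2(g_1-g_2) + B_j\star\nabla(g_1-g_2),
\end{equation*}
where $A_j, B_j$ are bounded multilinear contractions of $\nabla g_1,\nabla g_2,\nabla^2 g_1,\nabla^2 g_2$ of pointwise size at most $|\nabla^2 g_1|^{2-j}+|\nabla^2 g_2|^{2-j}$ (with the convention that negative powers are replaced by $1$). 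For MCF this collapses to $|N[g_1]-N[g_2]|\lesssim |\nabla g|\,|\nabla^2 g|\,|\nabla(g_1-g_2)|+|\nabla g|^2\,|\nabla^2(g_1-g_2)|$. In both cases every summand carries exactly one difference factor and, after pairing each $|\nabla^2 g_i|$ with the Morrey piece of the $X_T$-norm, exactly the right number of factors of $\|g_i\|_{X_T}$.

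Second, I plug this decomposition into two kinds of estimates. For the $\nabla I[g_1,g_2]$ part in $L^\infty$, in WF I integrate by parts to move the derivatives of $f_1,f_2$ onto the biharmonic kernel $b$ and exploit $\|\nabla^k b(\cdot,t)\|_{L^1}\lesssim t^{-k/4}$ from \eqref{eqn:KerDecay3}, while $f_0$ is paired directly with $b$; the time integrals then reduce to convergent Beta-type integrals exactly as in \eqref{N0-est}--\eqref{N2-est}. The MCF case is analogous with the heat kernel. The hard part, and the main obstacle, is the parabolic Morrey component $\sup_{x,R}R^{\frac{2}{n+4}}\|\nabla^2 I\|_{L^{n+4}(B_R(x)\times(R^2/2,R^2))}$ and its $n+6$ analogue. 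Here I would exploit the scale invariance $\|f_\lambda\|_{X_T}=\|f\|_{X_{\lambda^\alpha T}}$ to rescale to the unit cylinder, then split the source into a local part supported on a slightly larger cylinder and a non-local tail. On the local part, parabolic $L^p$ maximal regularity (Calder\'on--Zygmund for $\partial_t+A$) gives the $L^p\to L^p$ estimate for $\nabla^2(\partial_t+A)^{-1}$; on the non-local tail the kernel is smooth on the target cylinder, so pointwise decay \eqref{eqn:KerDecay1}--\eqref{eqn:KerDecay2} together with H\"older suffices. The main bookkeeping subtlety is checking that the integrability exponents $n+4$ and $n+6$ are precisely those that, via H\"older on the cubic product $|\nabla^2 g_i|^{k-1}\cdot|\nabla^2(g_1-g_2)|$, close in the scale-invariant Morrey class and match the maximal-regularity exponent; this is exactly why Koch--Lamm chose those exponents.
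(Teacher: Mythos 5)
Your algebraic decomposition of $N[g_1]-N[g_2]$ (one difference factor times bounded coefficients and the remaining powers of $\nabla^2 g_i$) and your treatment of the Morrey component of the $X_T$-norm (Calder\'on--Zygmund on a local piece plus pointwise kernel decay on the far tail) are exactly the Koch--Lamm route that the paper follows in its weighted analogues (Lemmas \ref{lemma:kochlamm}, \ref{lemma:kochlamm2} for the nonlinearity and the $K_1$/$K_2$ splitting in Lemmas \ref{lemma:keylemma1}, \ref{lemma:keylemma2} for the kernel). So most of the proposal is sound.

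There is, however, a genuine gap in your handling of the first component of the norm, $\sup_{0<t<T}\|\nabla I[g_1,g_2](\cdot,t)\|_{L^\infty}$. You propose to pair the $L^1$ bounds $\|\nabla^k b(\cdot,t)\|_{L^1}\lesssim t^{-k/4}$ with ``Beta-type integrals exactly as in \eqref{N0-est}--\eqref{N2-est}.'' Those estimates rest on the pointwise-in-time bounds \eqref{nonlin.struct.est.WF}, i.e.\ $\|\nabla^2 u(\cdot,s)\|_{L^\infty}\lesssim s^{-1/\alpha}$, which are available for the analytic solutions of Theorem \ref{thm:KochLammRegu} but \emph{not} for arbitrary $g_1,g_2\in B_\delta^{X_T}(0)$: the $X_T$-norm controls $\nabla^2 g$ only in the scale-invariant parabolic Morrey sense $R^{\frac{2}{n+4}}\|\nabla^2 g\|_{L^{n+4}(Q_R(x))}$, which bounds local averages of $\nabla^2 g$ like $t^{-1/\alpha}$ but permits arbitrarily large pointwise spikes. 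As stated, your estimate of the $L^\infty$ piece therefore does not close from the hypotheses of the lemma. The repair is the one the paper (following Koch--Lamm) actually uses: measure the decomposed nonlinearity in the Morrey-type spaces $Y_{l,T}$ via H\"older, and prove the linear estimate $\|\nabla^l Sg_l\|_{X_T}\lesssim\|g_l\|_{Y_{l,T}}$, whose $L^\infty$ component is obtained by splitting the space-time domain into the near parabolic cylinder $Q_{t^{1/\alpha}}(x)$, where one applies H\"older with the dual $L^{p}$ bound on the kernel against the local $L^{\frac{n+6}{3-l}}$ norm of $g_l$, and a far region treated by a dyadic decomposition exploiting the exponential decay of the kernel; this is carried out (in weighted form) in the estimates $I_1$, $I_{21}$, $I_{22}$ and $J_1$, $J_2$ of Lemmas \ref{lemma:keylemma1} and \ref{lemma:keylemma2}.
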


The above is established through the following linearized 
estimate:
$$
\left\|\int_0^T e^{-(T-s)A}g\,ds\right\|_{X_T}
\leq
\|g\|_{Y_T}
$$
for some appropriate spatial-temporal function space $Y_T$
\cite[Lemma 3.11, 5.3]{KochLamm}.
We will in fact present the proof of the above result in the setting of 
weighted function spaces, $X_T^\beta$ and $Y_T^\beta$ -- 
see Lemmas \ref{lemma:keylemma1} and \ref{lemma:keylemma2}.

We apply the above lemma with $T=1$, $g_1 = \Phi_\infty + v$ and $g_2=v$.
Suppose we can show that 
$\|g_1\|_{X_T}, \|g_2\|_{X_T} \ll 1$, then we would have
\begin{align*}
    \left\|\Phi_\infty\right\|_{X_T}
&=\left\|\int_{0}^{T}e^{-(T-s)A}(N[\Phi_\infty+v]-N[v])(x,s)ds\right\|_{X_T}
   \ll \left\|\Phi_\infty\right\|_{X_T},
\end{align*}
which implies $\left\|\Phi_\infty\right\|_{X_T}=0$.
Hence $\nabla\Phi_{\infty}\equiv0$ leading to $N[\Phi_{\infty}+v]=N[v]$
as $N(\cdot)$ only involves the derivatives of $\Phi_\infty$. 
From \eqref{eqn:fixpointequation}, we conclude that $\Phi_{\infty}\equiv0$.

Hence we are led to compute the $X_T$-norm of $g_1$ and $g_2$ under the
regularity estimates given by
\eqref{eqn:KLregularity} and \eqref{eqn:PhiRegularity}.

For MCF, we have,
\begin{align*}
  &\left\|\Phi_\infty\right\|_{X_T}+\left\|v\right\|_{X_T}\\ 
\lesssim & ({{\|\nabla v_0\|_{L^\infty(\R^n)}+\|\nabla p\|_{L^\infty(\R^n)}}})\left(1+\sup_{0<R^2<T} R^{\frac{2}{n+4}}\left(\int_{B_R(x)\times(R^2/2,R^2)} 
(t^{-\frac{1}{2}})^{n+4}dt dy\right)^{\frac{1}{n+4}}\right)\\
  \lesssim & (\|\nabla v_0\|_{L^\infty(\R^n)}+\|\nabla p\|_{L^\infty(\R^n)})
\left(1+\sup_{0<R^2<T} R^{\frac{2}{n+4}}\left(R^n\int_{R^2/2}^{R^2} 
t^{-\frac{n+4}{2}}dt \right)^{\frac{1}{n+4}}\right)\\
  \lesssim & ({{\|\nabla v_0\|_{L^\infty(\R^n)}+\|\nabla p\|_{L^\infty(\R^n)}}})\left(1+\sup_{0<R^2<T} R^{\frac{2}{n+4}}\left(R^n R^{-n-2} \right)^{\frac{1}{n+4}}\right)\\
  \lesssim &({{\|\nabla v_0\|_{L^\infty(\R^n)}+\|\nabla p\|_{L^\infty(\R^n)}}}).
\end{align*}

For WF, we have,
\begin{align*}
  &\left\|\Phi_\infty\right\|_{X_T}+\left\|v\right\|_{X_T}\\ 
\lesssim & ({{\|\nabla v_0\|_{L^\infty(\R^n)}+\|\nabla p\|_{L^\infty(\R^n)}}})\left(1+\sup_{0<R^4<T} R^{\frac{2}{n+6}}\left(\int_{B_R(x)\times(R^4/2,R^4)} 
(t^{-\frac{1}{4}})^{n+6}dt dy\right)^{\frac{1}{n+6}}\right)\\
  \lesssim & (\|\nabla v_0\|_{L^\infty(\R^n)}+\|\nabla p\|_{L^\infty(\R^n)})
\left(1+\sup_{0<R^4<T} R^{\frac{2}{n+6}}\left(R^n\int_{R^4/2}^{R^4} t^{-\frac{n+6}{4}}dt \right)^{\frac{1}{n+6}}\right)\\
  \lesssim & ({{\|\nabla v_0\|_{L^\infty(\R^n)}+\|\nabla p\|_{L^\infty(\R^n)}}})\left(1+\sup_{0<R^4<T} R^{\frac{2}{n+6}}\left(R^n R^{-n-2} \right)^{\frac{1}{n+6}}\right)\\
  \lesssim &({{\|\nabla v_0\|_{L^\infty(\R^n)}+\|\nabla p\|_{L^\infty(\R^n)}}}).
\end{align*}

The above show that in order to obtain the desired result, we just
need to take the Lipschitz norms of 
$v_0$ and $p$ to be sufficiently small which is indeed assumed to be
the case under the current setting.

\section{Equi-decay and Global Uniform Convergence}\label{sec:equidecay}
Here we will tackle Theorem \ref{thm:GlobalConver}.
In essence, if the gradient of initial perturbation is assumed to have some 
spatial decay, then we can obtain a global-in-space convergence result. 
The idea is to establish the equi-decay property of 
$\{\Phi_\lambda\}_{\lambda>1}$ via a contraction property of the nonlinear 
operators in some weighted spaces. For convenience, we recall here the
weighted Lipschitz seminorm used in Theorem \ref{thm:GlobalConver}:
\begin{equation}
[p]_\beta:=\|(1+|x|^\beta)\nabla p(x)\|_{L^\infty(\R^n)}.
\end{equation}

\subsection{MCF}
For the mean curvature flow case, 
we introduce the following function space which is the spatially
weighted version of $X_T$:
\begin{definition}
For every $0<T<\infty$, we define the function space $X_T^\beta$ by
\begin{multline}
X_T^\beta=\Big\{u\Big|
\|u\|_{X_T^\beta}:=\sup_{0<t<T}\sup_{x\in \R^n}(1+|x|^\beta)|\nabla u(t, x)|\\
+\sup_{x\in\R^n}\sup_{0<R^2<T}(1+|x|^\beta)R^{\frac{2}{n+4}}\|\nabla^2 u\|_{L^{n+4}(Q_R(x) )}<\infty\Big\},
\end{multline}
where 
\begin{equation*}
Q_R(x):=B_R(x)\times(R^2/2,R^2).
\end{equation*}
\end{definition}
Then we have the following linear estimate.
	\begin{lemma} For $k\geq 0$ and $0 < t < T$, 
		\begin{equation}
		\left\|t^{\frac{k}{2}}\nabla^k e^{t\Delta}p(x)\right\|_{X_T^\beta}\lesssim[p]_\beta. \label{eqn:MCFlinearestimate}
		\end{equation}	  
	  \label{lemma:MCFlinearestimate}
	\end{lemma}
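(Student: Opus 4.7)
\textbf{Proof plan for Lemma \ref{lemma:MCFlinearestimate}.} The plan is to reduce everything to a single pointwise estimate on derivatives of $e^{t\Delta}p$, then verify both components of the $X_T^\beta$ norm mechanically. Let $u(x,t) = t^{k/2}\nabla^k e^{t\Delta}p(x)$. The first term of $\|u\|_{X_T^\beta}$ involves $\nabla u = t^{k/2}\nabla^{k+1}e^{t\Delta}p$ and the second involves $\nabla^2 u = t^{k/2}\nabla^{k+2}e^{t\Delta}p$, so both are controlled by a pointwise bound on $\nabla^j e^{t\Delta}p$ for $j\geq 1$.

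The central pointwise estimate I would establish is
\begin{equation*}
\bigl|\nabla^j e^{t\Delta}p(x)\bigr| \lesssim t^{-(j-1)/2}\,\frac{[p]_\beta}{1+|x|^\beta}, \qquad j\geq 1,\ 0<t<T,
\end{equation*}
with implicit constant allowed to depend on $T$ and $\beta$. To get it, first integrate by parts once to move one derivative from the kernel to $p$, writing $\nabla^j e^{t\Delta}p(x) = \int \nabla^{j-1}_x h(x-y,t)\,\nabla p(y)\,dy$. Then combine the standard Gaussian bound $|\nabla^{j-1}h(x,t)|\lesssim t^{-(n+j-1)/2}e^{-c|x|^2/t}$ with the pointwise weight bound $|\nabla p(y)|\leq [p]_\beta/(1+|y|^\beta)$. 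The only substantive inequality needed is the weighted convolution estimate
\begin{equation*}
\int_{\R^n} t^{-n/2}\,e^{-c|x-y|^2/t}\,\frac{dy}{1+|y|^\beta} \;\lesssim\; \frac{1}{1+|x|^\beta}.
\end{equation*}
I would prove this by the standard splitting $\{|y|\geq |x|/2\}$ (where $(1+|y|^\beta)^{-1}\lesssim (1+|x|^\beta)^{-1}$ allows us to pull the weight out and use $\|h(\cdot,t)\|_{L^1}=1$) versus $\{|y|<|x|/2\}$ (where $|x-y|\geq |x|/2$ forces Gaussian decay $e^{-c|x|^2/t}$, which beats $1+|x|^\beta$ for $0<t<T$ using $r=|x|/\sqrt{t}$ and the elementary fact that $e^{-cr^2/2}(1+|x|^\beta)$ is bounded uniformly for $|x|\leq\sqrt{T}\,r$).

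With the pointwise estimate in hand, the first part of the norm is immediate: applying it with $j=k+1$ gives $(1+|x|^\beta)|\nabla u| \lesssim t^{k/2}\cdot t^{-k/2}[p]_\beta = [p]_\beta$. For the Morrey-type part, on $Q_R(x)$ we have $t\sim R^2$, so applying the pointwise estimate with $j=k+2$ gives $|\nabla^2 u(y,t)| \lesssim R^{k}\cdot R^{-(k+1)}\,[p]_\beta/(1+|y|^\beta) = R^{-1}[p]_\beta/(1+|y|^\beta)$. Since $|Q_R|^{1/(n+4)}\sim R^{(n+2)/(n+4)}$, the exponents balance:
\begin{equation*}
R^{\frac{2}{n+4}}\bigl\|\nabla^2 u\bigr\|_{L^{n+4}(Q_R(x))} \;\lesssim\; R^{\frac{2}{n+4}}\cdot R^{-1}\cdot R^{\frac{n+2}{n+4}}\,[p]_\beta\sup_{y\in B_R(x)}\frac{1}{1+|y|^\beta} \;=\; [p]_\beta\sup_{y\in B_R(x)}\frac{1}{1+|y|^\beta}.
\end{equation*}
It remains to absorb the factor $(1+|x|^\beta)$, which I handle by splitting $|x|\geq 2R$ (then $|y|\geq |x|/2$ for $y\in B_R(x)$) versus $|x|<2R$ (then $1+|x|^\beta\lesssim 1+(2\sqrt{T})^\beta$ is a $T$-dependent constant).

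The main obstacle is the weighted convolution bound; everything else is bookkeeping. The delicate case there is $|x|\lesssim\sqrt{t}$ with $t$ close to $T$, where neither the Gaussian decay nor the weight on $y$ is helping and one must simply absorb $(1+|x|^\beta)$ into a $T$-dependent constant. This is the reason the estimate is only stated for finite $T$.
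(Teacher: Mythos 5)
Your proposal is correct and follows essentially the same route as the paper's proof: the same integration by parts to put the derivative on $p$, the same near/far splitting to prove the weighted convolution bound (the paper splits at $|y-x|\lessgtr\frac{\sqrt{t}}{2\sqrt{T}}|x|$ rather than $|y|\lessgtr|x|/2$, but both yield $|y|\geq|x|/2$ on the near region and $T$-dependent Gaussian decay on the far one), and the same exponent bookkeeping for the Morrey component. Your explicit handling of absorbing $(1+|x|^\beta)$ on $B_R(x)$ by the cases $|x|\geq 2R$ and $|x|<2R$ is a detail the paper leaves implicit, but it is the right justification.
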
	

For the analysis of the nonlinear part, we introduce the weighted function spaces $Y_T^\beta$ as follows.
\begin{definition}
  For every $0<T\leq \infty$, we define the function space $Y_T^\beta$ by
  $$Y_T^\beta=\left\{ g\Big|\left\|g\right\|_{Y_T}^\beta=\sup_{x\in\R^n}\sup_{0<R^2<T}(1+|x|^\beta)R^{\frac{2}{n+4}}\left\|g\right\|_{L^{n+4}(Q_R(x))} <\infty\right\}.$$
\end{definition}

Now we define
\begin{equation}
	Sg(x,t):=\int_0^t\int_{\R^n}h(x-y,t-s)g(y, s)dyds..
\end{equation}
The following is the key technical estimate concerning $S$.
\begin{lemma} For $0<t<T<\infty$, 
  $$\sup_{0<t<T}\|(1+|x|^\beta)Sg(x, t)\|_{L^\infty(\R^n)}+\left\|Sg\right\|_{X_{T}^\beta}\lesssim\left\|g\right\|_{Y_T^\beta}.$$
  \label{lemma:keylemma1}
\end{lemma}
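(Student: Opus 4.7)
\textbf{Proof plan for Lemma \ref{lemma:keylemma1}.} The statement decomposes into three estimates on $Sg(x,t)=\int_0^t\int_{\R^n}h(x-y,t-s)g(y,s)\,dy\,ds$: (i) a pointwise bound on $(1+|x|^\beta)|Sg|$, (ii) a pointwise bound on $(1+|x|^\beta)|\nabla Sg|$, and (iii) a weighted parabolic Morrey-type bound on $(1+|x_0|^\beta)R^{\frac{2}{n+4}}\|\nabla^2 Sg\|_{L^{n+4}(Q_R(x_0))}$. The $\beta=0$ versions of (ii) and (iii) are the linear estimates underlying the Koch--Lamm fixed-point argument, so my task will be to absorb the spatial weight into that existing framework.

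The central device will be the elementary inequality
\[
1+|x|^\beta \le C_\beta\bigl(1+|y|^\beta + |x-y|^\beta\bigr),\qquad x,y\in\R^n,
\]
which I plan to insert inside the convolution defining $Sg$ and its derivatives. This splits each of the weighted quantities into a \emph{near} piece, in which the weight is transferred onto $g$, and a \emph{far} piece, in which the factor $|x-y|^\beta$ accompanies the heat kernel. For the near piece, I would reduce to the unweighted estimate applied to $\tilde g(y,s):=(1+|y|^\beta)g(y,s)$ after verifying $\|\tilde g\|_{Y_T}\lesssim \|g\|_{Y_T^\beta}$. This verification is a two-case analysis on each cylinder $Q_R(x_0)$: when $R\le|x_0|/2$ the weight $(1+|y|^\beta)$ is comparable to $(1+|x_0|^\beta)$ throughout $Q_R(x_0)$, while when $R>|x_0|/2$ both $1+|x_0|^\beta$ and $\sup_{y\in B_R(x_0)}(1+|y|^\beta)$ are comparable to $1+R^\beta$ since $|x_0|\lesssim R$. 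For the far piece, I would absorb the polynomial factor into the Gaussian tail via
\[
|z|^\beta\,|\nabla^k h(z,\tau)|\lesssim \tau^{\beta/2}\,\tau^{-k/2}\,\tilde h(z,2\tau),
\]
where $\tilde h$ is a Gaussian kernel with slightly smaller decay constant; the extra factor $\tau^{\beta/2}\lesssim T^{\beta/2}$ can be paired with the spatial savings extracted from $\|g\|_{Y_T^\beta}$ after $L^\infty$--$L^{(n+4)/(n+3)}$ H\"older in the parabolic dual exponent.

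The main obstacle I anticipate is estimate (iii), since naively multiplying a parabolic Calder\'on--Zygmund output by $(1+|x_0|^\beta)$ destroys the $L^{n+4}$-boundedness of the singular integral. My plan here is a further localization: decompose $g=g_{\mathrm{near}}+g_{\mathrm{far}}$, where $g_{\mathrm{near}}$ is the restriction of $g$ to a parabolic dilate $Q_{2R}(x_0)$ together with enough neighbouring cylinders of comparable size to cover the relevant time slab, and $g_{\mathrm{far}}$ is the complement. On the dilate the weight $(1+|y|^\beta)$ is comparable to $(1+|x_0|^\beta)$, so the weighted $L^{n+4}$ bound for $\nabla^2 Sg_{\mathrm{near}}$ follows from the unweighted Koch--Lamm estimate with only a constant loss. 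For $\nabla^2 Sg_{\mathrm{far}}$, the kernel $\nabla^2 h(x-y,t-s)$ is smooth on the relevant domain, and a dyadic decomposition of the complement in parabolic distance to $Q_R(x_0)$, combined with the kernel bound above, yields a geometric series controlled by $\|g\|_{Y_T^\beta}$. Combining (i)--(iii) then gives the lemma.
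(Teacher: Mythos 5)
Your plan is correct and would yield the lemma, but it routes the spatial weight through the argument differently from the paper, so a comparison is worth recording. The paper does not reduce to the unweighted Koch--Lamm estimate: it reruns the entire linear estimate with the weight carried along. For each of $Sg$, $\nabla Sg$, $\nabla^2 Sg$ it splits space--time into a cylinder adapted to the singularity of the kernel (handled by H\"older against the $L^{\frac{n+4}{n+3}}$ kernel bounds, resp.\ the parabolic Calder\'on--Zygmund estimate \eqref{eqn:singintheat} for the second-derivative piece) and its complement, which it covers by dyadic-in-time lattices of cylinders $Q_{2^{-m/2}\sqrt{t}}(z)$, $z\in 2^{-m/2}\sqrt{t}\,\mathbb{Z}^n$; the weight is then recovered by splitting the lattice points according to whether $|z-x|\le \sqrt{t}\,|x|/2$ (so that $1+|y|^\beta\sim 1+|x|^\beta$ and the weight transfers onto $g$) or not (so that the Gaussian factor $e^{-c|x|/4}$ swallows any polynomial weight). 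Your inequality $1+|x|^\beta\lesssim 1+|y|^\beta+|x-y|^\beta$ implements exactly this dichotomy at the level of the integrand, and your two supporting claims --- that $\|(1+|\cdot|^\beta)g\|_{Y_T}\lesssim\|g\|_{Y_T^\beta}$ (valid because $R\le\sqrt{T}$, with $T$-dependent constants, which the paper's normalization $T=1$ also permits) and that $|x-y|^\beta$ can be absorbed into a dilated Gaussian at the cost of $T^{\beta/2}$ --- are both correct; your near/far treatment of the Morrey piece coincides with the paper's $K_1/K_2$ split, including the use of \eqref{eqn:singintheat} on the enlarged cylinder where $1+|y|^\beta\sim1+|x_0|^\beta$. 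What your route buys is modularity (the unweighted Koch--Lamm lemma is quoted as a black box); what the paper's buys is a self-contained proof. One item you should make explicit: the quantity $\sup_{0<t<T}\|Sg(\cdot,t)\|_{L^\infty}$ in your part (i) is \emph{not} controlled by the $X_T$-norm of $Sg$ as defined in \eqref{eqn:MCFXT}, which only sees $\nabla Sg$ and $\nabla^2 Sg$; so the ``unweighted estimate'' you invoke for the zeroth-order piece is not literally $\|Sg\|_{X_T}\lesssim\|g\|_{Y_T}$, and you must prove $\sup_{0<t<T}\|Sg(\cdot,t)\|_{L^\infty}\lesssim \|g\|_{Y_T}$ separately --- the same H\"older-plus-dyadic computation gives it with a factor $t^{1/2}$, exactly as in the paper's estimate of $I_1+I_2$.
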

With the above, then we have the following result for the nonlinear functional.
\begin{lemma}
  For every $0<T<\infty$, 
  \begin{equation}
    \left\|N[u]-N[v]\right\|_{Y_T^\beta}
    \lesssim
    \left( \left\|u\right\|_{X_T}^2+\left\| v\right\|_{X_T}^2 \right)
    \left\|u-v\right\|_{X_T^\beta}. 
    \label{eqn:MCFNdiff}
  \end{equation}
In particular,
there exist $\varepsilon>0$ and $q<1$ such that for all $[v_0]+[p]_{\beta}<\varepsilon$, 
          \begin{equation}
            \left\|\int_{0}^{t}e^{(t-s)\Delta}(N[u]-N[v])(x, s)ds\right\|_{X_T^\beta}\leq q\left\|u-v\right\|_{X_T^\beta}.
            \label{eqn:4.7}
          \end{equation}
  \label{lemma:kochlamm}
\end{lemma}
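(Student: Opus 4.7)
The plan is to establish \eqref{eqn:MCFNdiff} by a pointwise linearization of $N[\cdot]$ that isolates the factor $u-v$, and then deduce \eqref{eqn:4.7} by feeding \eqref{eqn:MCFNdiff} into the linear estimate of Lemma~\ref{lemma:keylemma1}.

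First I would exploit the explicit form $N[u]=w^{-2}\nabla^2u\star\nabla u\star\nabla u$ with $w=\sqrt{1+|\nabla u|^2}$. Parametrizing by $u_\theta:=\theta u+(1-\theta)v$ and applying the fundamental theorem of calculus to $\theta\mapsto N[u_\theta]$ yields a representation of the form
\[
N[u]-N[v]=\int_0^1\Big(A^1_\theta\star\nabla^2(u-v)+A^2_\theta\star\nabla(u-v)\Big)d\theta,
\]
where the smallness $|\nabla u_\theta|\ll 1$ keeps the $w_\theta^{-2}$ factors uniformly bounded and gives $|A^1_\theta|\lesssim |\nabla u|^2+|\nabla v|^2$, $|A^2_\theta|\lesssim (|\nabla u|+|\nabla v|)(|\nabla^2 u|+|\nabla^2 v|)$. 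This produces the pointwise estimate
\[
|N[u]-N[v]|\lesssim(|\nabla u|^2+|\nabla v|^2)|\nabla^2(u-v)|+(|\nabla u|+|\nabla v|)(|\nabla^2 u|+|\nabla^2 v|)|\nabla(u-v)|.
\]

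Next I would take the $L^{n+4}(Q_R(x))$ norm of both sides and multiply by $(1+|x|^\beta)R^{2/(n+4)}$. For the first term, the factors $|\nabla u|,|\nabla v|$ come out in $L^\infty$ with bound $\|u\|_{X_T}+\|v\|_{X_T}$, and the remaining piece $(1+|x|^\beta)R^{2/(n+4)}\|\nabla^2(u-v)\|_{L^{n+4}(Q_R(x))}$ is exactly what appears in $\|u-v\|_{X_T^\beta}$. For the second term, I would place $|\nabla u|+|\nabla v|$ in $L^\infty$, keep $|\nabla^2 u|+|\nabla^2 v|$ in $L^{n+4}(Q_R(x))$ (contributing another factor of $\|u\|_{X_T}+\|v\|_{X_T}$), and transfer the spatial weight onto $\nabla(u-v)$ using the comparison
\[
\frac{1+|x|^\beta}{1+|y|^\beta}\lesssim 1\quad\text{for all }y\in B_R(x),\ R^2<T.
\]
Verifying this comparison is the main technical point: for $|x|\geq 2R$ it follows from $|y|\geq |x|/2$, while for $|x|<2R$ the quantity $(1+|x|^\beta)$ is itself bounded by a constant depending only on $\beta$ and $T$. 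Combining the two bounds yields \eqref{eqn:MCFNdiff}.

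Finally, to deduce \eqref{eqn:4.7} I would apply Lemma~\ref{lemma:keylemma1} with $g=N[u]-N[v]$ to get
\[
\left\|\int_0^t e^{(t-s)\Delta}(N[u]-N[v])\,ds\right\|_{X_T^\beta}\lesssim\|N[u]-N[v]\|_{Y_T^\beta}\lesssim(\|u\|_{X_T}^2+\|v\|_{X_T}^2)\|u-v\|_{X_T^\beta}.
\]
Under the smallness assumption $[v_0]+[p]_\beta<\varepsilon$, Theorem~\ref{thm:KochLammRegu} yields $\|u\|_{X_T},\|v\|_{X_T}\lesssim\varepsilon$, so choosing $\varepsilon$ small enough forces $q:=C\varepsilon^2<1$. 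The main obstacle throughout is the redistribution of the spatial weight $(1+|x|^\beta)$ between the evaluation point $x$ of the $Y_T^\beta$-norm and the integration variable $y\in Q_R(x)$; the finiteness $R^2<T<\infty$ together with the polynomial form of the weight is precisely what licenses this transfer.
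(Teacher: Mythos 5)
Your proposal is correct and follows essentially the same route as the paper: a pointwise linearization of $N$ giving $|N[u]-N[v]|\lesssim(|\nabla u|+|\nabla v|)^2|\nabla^2(u-v)|+(|\nabla u|+|\nabla v|)(|\nabla^2 u|+|\nabla^2 v|)|\nabla(u-v)|$, followed by a H\"older splitting in the weighted norms and an application of Lemma~\ref{lemma:keylemma1} together with the Koch--Lamm bound $\|u\|_{X_T},\|v\|_{X_T}\lesssim\varepsilon$. The only cosmetic differences are that the paper obtains the pointwise bound by a direct difference identity for $(1+|\nabla u|^2)^{-1}$ rather than the fundamental theorem of calculus along $u_\theta$, and your explicit verification of the weight comparison $(1+|x|^\beta)\lesssim(1+|y|^\beta)$ on $Q_R(x)$ is a point the paper leaves implicit.
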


We will give the proofs of Lemmas \ref{lemma:MCFlinearestimate} and 
\ref{lemma:kochlamm} here but that for Lemma \ref{lemma:keylemma1}
in the Appendix due to its length and technical nature.

	\begin{proof}[Proof of Lemma \ref{lemma:MCFlinearestimate}]
	  It suffices to show that there exists a $C>0$ depending only on $T$, $n$,  $\beta$ and $k$ such that if $[p]_\beta\leq 1$, then $\left\|e^{t\Delta}p(x)\right\|_{X_T^\beta}\leq C$. From the definition of
	  $\|\cdot\|_{X_T^\beta}$, We need to estimate two terms. 

First, consider
	  \begin{align*}
	  	&|t^{\frac{k}{2}}\nabla^k\nabla e^{t\Delta }p(x)|\\
	  	&=\frac{1}{(4\pi t)^{\frac{n}{2}}}\left|\int_{\R^n}t^{\frac{k}{2}}\nabla^k_x\nabla_x e^{-\frac{|x-y|^2}{4t}}p(y)dy\right|\\
	  	&=\frac{1}{(4\pi t)^{\frac{n}{2}}}\left|\int_{\R^n}t^{\frac{k}{2}}\nabla^k_y e^{-\frac{|x-y|^2}{4t}}\nabla_y p(y)dy\right|\\
	  	&\leq \frac{1}{(4\pi t)^{\frac{n}{2}}}\int_{\R^n}\left|t^{\frac{k}{2}}\nabla_y^ke^{-\frac{|x-y|^2}{4t}}\right||\nabla_y p(y)|dy\\
	  	&\leq\frac{1}{(4\pi t)^{\frac{n}{2}}}\left( \int_{ \left\{ y:|y-x|\leq \frac{\sqrt{t}}{2\sqrt{T}}|x| \right\}}+\int_{ \left\{ y:|y-x|\geq \frac{\sqrt{t}}{2\sqrt{T}}|x| \right\}} \left|{{\mathcal{P}}}_k\left(\frac{x-y}{\sqrt{t}}\right)\right|e^{-\frac{|x-y|^2}{4t}}\frac{1}{1+|y|^\beta}\right)dy\\
	  	&=:\mathrm{I}+\mathrm{II}, 
	  \end{align*}
	  where ${{\mathcal{P}}}_k$ is some polynomial of degree $k$. For ${\rm I}$, $|y-x|\le \frac{\sqrt{t}}{2\sqrt{T}}|x|$ implies that $|y|\ge \frac{|x|}{2}$ for $0<t<T$. Hence, 
	  \begin{equation*}
	  	\frac{1}{1+|y|^\beta}\le \frac{1}{1+|x/2|^\beta}=\frac{2^\beta}{2^\beta+|x|^\beta}\le\frac{2^\beta}{1+|x|^\beta}
	  \end{equation*}
	  so that
	  \begin{align*}
	  	\mathrm{I}&\lesssim\frac{1}{1+|x|^\beta}\int_{ \left\{ y:|y-x|\leq \frac{\sqrt{t}}{2\sqrt{T}}|x| \right\}}\frac{1}{(4\pi t)^{\frac{n}{2}}}\left|{{\mathcal{P}}}_k\left( \frac{x-y}{\sqrt{t}} \right)\right|e^{-\frac{|x-y|^2}{4t}}dy\\
	  	&\lesssim\frac{1}{1+|x|^\beta}\int_{\R^n}|{{\mathcal{P}}}_k(z)|e^{-|z|^2}dz \lesssim \frac{1}{1+|x|^\beta},
	  \end{align*}
	  while for II, when $|y-x|\ge\frac{\sqrt{t}}{2\sqrt{T}}|x|$, we have 
	  \begin{equation*}
	  	e^{-\frac{|x-y|^2}{4t}}=e^{-\frac{|x-y|^2}{8t}}e^{-\frac{|x-y|^2}{8t}}\le e^{-\frac{|x|^2}{32T}}e^{-\frac{|x-y|^2}{8t}},
	  \end{equation*}
	  so that
	  \begin{align*}
	  	\mathrm{II}&\leq e^{-\frac{|x|^2}{32T}}\int_{ \left\{ y:|y-x|\geq \frac{\sqrt{t}}{2\sqrt{T}}|x| \right\}}\frac{1}{(4\pi t)^{\frac{n}{2}}}e^{-\frac{|x-y|^2}{8t}}\left|{{\mathcal{P}}}_k\left( \frac{x-y}{2\sqrt{t}} \right)\right|dy\\
	  	&\lesssim e^{-\frac{|x|^2}{32T}}\int_{\R^n}|{{\mathcal{P}}}_k(z)|e^{-\frac{|z|^2}{2}}dz
	  	\lesssim e^{-\frac{|x|^2}{32T}}\lesssim\frac{1}{1+|x|^\beta}.
	  \end{align*}
	  Combining I and II, we have
	  \begin{equation}
	  	\left|t^{\frac{k}{2}}\nabla^k{{\nabla}} e^{t\Delta} p(x)\right|
\lesssim \frac{1}{1+|x|^\beta}.
	  	\label{eqn:MCFlinearpointwiseestimate}
	  \end{equation} 

	  Second, we estimate
	  \begin{eqnarray}\label{est.sec}
	  	&&\sup_{x\in\R^n}\sup_{0<R^2<T}(1+|x|^\beta)R^{\frac{2}{n+4}}\left\|t^{\frac{k}{2}}\nabla^k\nabla^2e^{t\Delta} p(x)\right\|_{L^{n+4}(Q_R(x))}.
	  \end{eqnarray}
Note that
\begin{eqnarray*}
& & \left\|t^{\frac{k}{2}}\nabla^k\nabla^2e^{t\Delta} p(x)\right\|^{n+4}_{L^{n+4}(Q_R(x))}\\
& = & 
\int_{R^2/2}^{R^2}\int_{B_R(x)}\left[
t^{\frac{k}{2}}\nabla^k\nabla^2\int 
\frac{1}{(4\pi t)^\frac{n}{2}}e^{-\frac{|y-z|^2}{4t}}p(z)\,dz
\right]^{n+4}\,dy\,dt\\
& = &
\int_{R^2/2}^{R^2}\int_{B_R(x)}\left[
t^{\frac{k}{2}}\int 
\frac{1}{(4\pi t)^\frac{n}{2}}\nabla^{k+1}e^{-\frac{|y-z|^2}{4t}}\nabla p(z)\,dz
\right]^{n+4}\,dy\,dt\\
& \lesssim &
\int_{R^2/2}^{R^2}\int_{B_R(x)}\left[
t^{\frac{k}{2}}\int 
\frac{1}{(4\pi t)^\frac{n}{2}}t^{-\frac{k+1}{2}}
e^{-\frac{|y-z|^2}{4t}}{\cal P}_{k+1}\left(\frac{y-z}{\sqrt{t}}\right)
\frac{1}{1+|z|^\beta}\,dz
\right]^{n+4}\,dy\,dt\\
& \lesssim &
\int_{R^2/2}^{R^2}\int_{B_R(x)}\left[
t^{-\frac{1}{2}}\int 
\frac{1}{(4\pi t)^\frac{n}{2}}
e^{-\frac{|y-z|^2}{4t}}{\cal P}_{k+1}\left(\frac{y-z}{\sqrt{t}}\right)
\frac{1}{1+|z|^\beta}\,dz
\right]^{n+4}\,dy\,dt\\
& \lesssim &
\int_{R^2/2}^{R^2}\int_{B_R(x)}\left[
\frac{t^{-\frac{1}{2}}}
{1+|y|^\beta}\right]^{n+4}\,dy\,dt\\
&\lesssim&
\int_{R^2/2}^{R^2}t^{-\frac{1}{2}(n+4)}dt\int_{B_R(x)}\frac{1}{(1+|y|^\beta)^{n+4}}dy\\
&\lesssim &R^{-(n+2)} |B_R(x)| \frac{1}{(1+|x|^{\beta})^{n+4}} \\
&\lesssim &\frac{R^{-2}}{(1+|x|^\beta)^{n+4}}
\end{eqnarray*}
which leads to that $\text{\eqref{est.sec}}\lesssim 1$.

The above two parts combined give 
$\|t^{\frac{k}{2}}\nabla^k e^{t\Delta} p(x)\|_{X_T^\beta}\le C$. 
	  	\end{proof}
\begin{proof}[Proof of Lemma \ref{lemma:kochlamm}]
Recall the form \eqref{eqn:GrapMCF} for the nonlinear term $N(u)$. 
First note that
\begin{equation*}
  \left|(1+|\nabla u|^2)^{-1}-(1+|\nabla v|^2)^{-1}\right|\le \frac{(|\nabla u|+|\nabla v|)|\nabla (u-v)|}{(1+|\nabla u|^2)(1+|\nabla v|^2)}.
\end{equation*}
Then we have
\begin{eqnarray*}
  & & |N[u]-N[v]|\\
  &=&\left|(1+|\nabla u|^2)^{-1} \nabla u\star \nabla u\star \nabla^2 u-(1+|\nabla v|^2)^{-1}\nabla v\star \nabla v\star \nabla^2 v\right|\\
  &\lesssim&(|\nabla u|+|\nabla v|)(|\nabla^2 u|+|\nabla^2 v|)|\nabla(u-v)|
 +(|\nabla  u|+|\nabla  v|)^2|\nabla^2 (u-v)|.
\end{eqnarray*}
Then estimate \eqref{eqn:MCFNdiff} follows from
\begin{eqnarray*}
	&&\|N[u]-N[v]\|_{Y_T^\beta}\\
	&=&\sup_{x\in\R^n}\sup_{0<R^2<T}(1+|x|^\beta)R^{\frac{2}{n+4}}\|N[u]-N[v]\|_{L^{n+4}(Q_R(x))}\\
	&\lesssim&\sup_{0<t<T}(\|\nabla u\|_{L^\infty(\R^n)}+\|\nabla v\|_{L^\infty(\R^n)})\times\\
	&&\sup_{x\in\R^n}\sup_{0<R^2<T}R^{\frac{2}{n+4}}\left(\|\nabla^2 u\|_{L^{n+4}(Q_R(x))}+\|\nabla^2 v\|_{L^{n+4}(Q_R(x))}\right)\times\\
	&&\sup_{0<t<T}\sup_{x\in\R^n}(1+|x|^\beta)|\nabla(u-v)|\\
	&&+\sup_{0<t<T}\left( \|\nabla u\|_{L^\infty(\R^n)}+\|\nabla v\|_{L^\infty(\R^n)} \right)^2\sup_{x\in\R^n}(1+|x|^\beta)\sup_{0<R^2<T}R^{\frac{2}{n+4}}\|\nabla^2(u-v)\|_{L^{n+4}(Q_R(x))}\\
	&\lesssim&(\|u\|_{X_T}+\|v\|_{X_T})^2\|u-v\|_{X_T^\beta}. 
\end{eqnarray*} 
For \eqref{eqn:4.7}, using Lemma \ref{lemma:keylemma1}, we have that 
\begin{align*}
	   \left\| S(N[u]-N[v])\right\|_{X_T^\beta}
	   &
	   \lesssim \left\|N[u]-N[v]\right\|_{Y_{T}^\beta}\\
	   &\lesssim\left(\left\|u\right\|_{X_T}^2+\left\|v\right\|_{X_T}^2\right)\left\|u-v\right\|_{X_T^\beta}\\
	   &\lesssim([p]^2+[v_0]^2)\left\|u-v\right\|_{X_T^\beta}
	 \lesssim\varepsilon^2\left\|u-v\right\|_{X_T^\beta}.
	 \end{align*}
Note that we have used unweighted version Theorem \ref{thm:KochLammRegu} to
estimate the $\|\cdot\|_{X_T}$ norms by the initial data.
Hence, \eqref{eqn:4.7} holds if we take $\varepsilon$ to be sufficiently small. 
\end{proof}

\subsection{WF case}
The {{strategy}} here is similar to the MCF case. 
We again introduce the following weighted function space:
\begin{multline*}
  X_T^\beta=\Big\{u\left| \left\|u\right\|_{X_T^\beta}:=\right.\\
\sup_{0<t<T}\sup_{x\in \R^n}(1+|x|^\beta)|\nabla u(x, t)|+\sup_{x\in \R^n}\sup_{0<R^4<T}(1+|x|^\beta)R^{\frac{2}{n+6}}\left\|\nabla^2 u\right\|_{L^{n+6}(Q_R(x))}
< \infty\Big\},
\end{multline*}
where $Q_R(x):=B_R(x)\times(R^4/2, R^4).$
\begin{lemma} For $k\geq 0$,
  \begin{equation}
 \left\|t^{\frac{k}{4}}\nabla^ke^{-t\Delta^2}p(x)\right\|_{X_T^\beta}\lesssim [p]_\beta.
    \label{eqn:5.22}
  \end{equation}
  \label{Lemma:linearestimateMCF}
\end{lemma}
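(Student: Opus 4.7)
The plan is to mirror the proof of Lemma \ref{lemma:MCFlinearestimate}, replacing the Gaussian kernel by the biharmonic heat kernel $b(x,t)$ and using the subexponential decay estimates \eqref{eqn:KerDecay1}--\eqref{eqn:KerDecay2}. As in the MCF case, it suffices to bound the two pieces of $\|\cdot\|_{X_T^\beta}$ separately; the key move is to shift one derivative from the kernel onto $\nabla p$ via integration by parts, and then to split the resulting spatial integral into ``near'' and ``far'' regions relative to $x$.

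\textbf{Step 1 (pointwise gradient bound).} For the first piece of $\|\cdot\|_{X_T^\beta}$, write
\[
t^{k/4}\nabla^{k+1}_x\!\!\int_{\R^n}\!\! b(x-y,t)\,p(y)\,dy
= (-1)^{k}t^{k/4}\!\!\int_{\R^n}\!\! \nabla_y^k b(x-y,t)\,\nabla p(y)\,dy,
\]
so that, using \eqref{eqn:KerDecay2},
\[
|t^{k/4}\nabla^k\nabla e^{-t\Delta^2}p(x)|
\lesssim [p]_\beta\!\int_{\R^n}\!\! t^{-n/4}\exp\!\Bigl(-C_k\tfrac{|x-y|^{4/3}}{t^{1/3}}\Bigr)\tfrac{1}{1+|y|^\beta}\,dy.
\]
Split the integral at $|y-x|=\tfrac{t^{1/4}}{2T^{1/4}}|x|$. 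On the near region, $|y|\ge|x|/2$ so that $(1+|y|^\beta)^{-1}\lesssim (1+|x|^\beta)^{-1}$, and the change of variables $z=(x-y)/t^{1/4}$ renders the remaining integral bounded. On the far region, use the splitting
\(\exp(-C_k|y-x|^{4/3}/t^{1/3})=\exp(-C_k|y-x|^{4/3}/(2t^{1/3}))\cdot\exp(-C_k|y-x|^{4/3}/(2t^{1/3}))\);
the first factor is bounded by $\exp(-c|x|^{4/3}/T^{1/3})\lesssim (1+|x|^\beta)^{-1}$, while the second integrates to a constant multiple of $t^{n/4}$. Combining gives $(1+|x|^\beta)|t^{k/4}\nabla^k\nabla e^{-t\Delta^2}p(x)|\lesssim [p]_\beta$.

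\textbf{Step 2 ($L^{n+6}$ bound).} For the second piece of $\|\cdot\|_{X_T^\beta}$, repeat the integration by parts once more:
\[
t^{k/4}\nabla^k\nabla^2 e^{-t\Delta^2}p(y)
= (-1)^{k+1}t^{k/4}\!\!\int_{\R^n}\!\!\nabla_y^{k+1}b(y-z,t)\,\nabla p(z)\,dz,
\]
and apply the same splitting argument together with $|\nabla^{k+1}b(y-z,t)|\lesssim t^{-(n+k+1)/4}\exp(-C|y-z|^{4/3}/t^{1/3})$ to obtain the pointwise bound
\[
|t^{k/4}\nabla^k\nabla^2 e^{-t\Delta^2}p(y)|\lesssim \tfrac{t^{-1/4}}{1+|y|^\beta}[p]_\beta.
\]
Integrating over $Q_R(x)=B_R(x)\times(R^4/2,R^4)$, the temporal factor gives
\(\int_{R^4/2}^{R^4}t^{-(n+6)/4}\,dt\lesssim R^{-(n+2)}\),
while the spatial factor, as in the MCF computation, yields
\(\int_{B_R(x)}(1+|y|^\beta)^{-(n+6)}\,dy\lesssim R^n(1+|x|^\beta)^{-(n+6)}\).
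Hence
\[
\|t^{k/4}\nabla^k\nabla^2 e^{-t\Delta^2}p\|_{L^{n+6}(Q_R(x))}
\lesssim \tfrac{R^{-2/(n+6)}}{1+|x|^\beta}[p]_\beta,
\]
and multiplying by $(1+|x|^\beta)R^{2/(n+6)}$ gives the desired bound.

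\textbf{Main obstacle.} The chief technical difference from the MCF case is that $b(x,t)$ enjoys only subexponential decay $\exp(-C|x|^{4/3}/t^{1/3})$ rather than Gaussian decay. This is, however, still faster than any polynomial, so the splitting trick in Step 1 that uses ``half'' the exponential to absorb the factor $(1+|x|^\beta)$ goes through verbatim. The only real bookkeeping is to propagate the exponent $4/3$ correctly when comparing $|y-x|^{4/3}/t^{1/3}$ to $|x|^{4/3}/T^{1/3}$ on the far region; once this is in hand, the rest of the proof is a direct parallel of the MCF argument.
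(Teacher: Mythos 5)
Your proposal is correct and follows essentially the same route as the paper's proof: integration by parts to transfer one derivative of the biharmonic kernel onto $\nabla p$, the near/far splitting of the spatial integral at $|y-x|=\tfrac{t^{1/4}}{2T^{1/4}}|x|$ with the half-exponential absorbing the weight on the far region, and the direct computation of the temporal and spatial factors for the $L^{n+6}(Q_R(x))$ piece. No substantive differences.
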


Anticipating the forms of the nonlinear terms in \eqref{eqn:GrapWF}, 
we introduce the following weighted function spaces $Y_{0, T}^\beta$, $Y_{1, T}^\beta$ and $Y_{2, T}^\beta$,
where
\begin{align*}
  \left\|g_0\right\|_{Y_{0, T}^\beta}&=\sup_{x\in\R^n}\sup_{0<R^4<T}(1+|x|^\beta)R^{\frac{6}{n+6}}\left\|g_0\right\|_{L^{\frac{n+6}{3}}(Q_R(x))}, \\
  \left\|g_1\right\|_{\YBB}&=\sup_{x\in\R^n}\sup_{0<R^4<T}(1+|x|^\beta)R^{\frac{4}{n+6}}\left\|g_1\right\|_{L^{\frac{n+6}{2}}(Q_R(x))},\\
  \left\|g_2\right\|_{\YBBB}&=\sup_{x\in\R^n}\sup_{0<R^4<T}(1+|x|^\beta)R^{\frac{2}{n+6}}\left\|g_2\right\|_{L^{n+6}(Q_R(x))}.
\end{align*}
Now consider the following operator:
\begin{equation}
  Sg(x, t):=\int_0^t e^{-(t-s)\triangle^2}g\,ds = 
\int_{0}^{t}\int_{\R^n}b(x-y, t-s)g(y,s)dyds.
  \label{}
\end{equation}
The key estimate is the following lemma:
\begin{lemma} For every $0<t<T<\infty$, 
 \begin{equation}
   \sum_{l=0}^{2}\left( \sup_{0<t<T}\|(1+|x|^\beta)\nabla^l Sg_l(\cdot , t)\|_{L^\infty(\R^n)}+\left\|\nabla^l Sg_l\right\|_{X_T^\beta} \right)\lesssim\sum_{l=0}^{2}\left\|g_l\right\|_{Y_{l, T}^\beta}.
   \label{}
 \end{equation}
  \label{lemma:keylemma2}
\end{lemma}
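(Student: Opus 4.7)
The proof follows the same architecture as Lemma \ref{lemma:keylemma1}, with the Gaussian heat kernel $h$ replaced by the biharmonic kernel $b$ and all scaling exponents adjusted to reflect $\partial_t\sim\Delta^2$ rather than $\partial_t\sim\Delta$. For each $l\in\{0,1,2\}$ I would treat the convolution
$$\nabla^m Sg_l(x,t)=\int_0^t\!\!\int_{\R^n}\nabla^m_x b(x-y,t-s)\,g_l(y,s)\,dy\,ds,\qquad m\in\{l,l+1,l+2\},$$
and control the three quantities appearing in $\|\nabla^l Sg_l\|_{X_T^\beta}$ together with the weighted $L^\infty$ norm by $\|g_l\|_{Y_{l,T}^\beta}$. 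The weight $(1+|x|^\beta)$ is handled by the near/far splitting already used in Lemma \ref{lemma:MCFlinearestimate}: setting $N_x=\{y:|y-x|\leq |x|/2\}$ and $F_x=\R^n\setminus N_x$, on $N_x$ one has $(1+|x|^\beta)\lesssim (1+|y|^\beta)$, so the weight is absorbed into the $Y_{l,T}^\beta$-norm of $g_l$; on $F_x$ the sub-Gaussian decay \eqref{eqn:KerDecay2} gives an exponential factor $\exp(-C|x|^{4/3}/T^{1/3})$ that dominates any polynomial weight.

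For the two $L^\infty$-type quantities (the cases $m=l$ and $m=l+1$) I would decompose the $s$-integral dyadically and, within each time slab, further decompose the $y$-integral on $N_x$ into dyadic spatial annuli. Applying Hölder's inequality with the exponent conjugate to $(n+6)/(3-l)$ on parabolic cylinders of size $\sim (t-s)^{1/4}$, and using
$$\|\nabla^m b(\cdot,\tau)\|_{L^q(\R^n)}\lesssim \tau^{-\frac{m}{4}-\frac{n}{4}(1-\frac{1}{q})},$$
a consequence of \eqref{eqn:KerDecay1}--\eqref{eqn:KerDecay2}, one obtains the pointwise bound with an $s$-integrable factor. The exponents in the definitions of $\YB,\YBB,\YBBB$ are precisely the critical ones for which this $s$-integral is scale-invariant and converges uniformly in $t<T$.

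The Morrey piece $(1+|x|^\beta)R^{2/(n+6)}\|\nabla^{l+2}Sg_l\|_{L^{n+6}(Q_R(x))}$ is the main technical step. I would split $g_l=g_l\mathbf{1}_{\widetilde Q}+g_l\mathbf{1}_{\widetilde Q^c}$ with $\widetilde Q=B_{2R}(x)\times(0,R^4)$. The non-local contribution is treated as in the pointwise step, the Gaussian-type tail decay of $\nabla^{l+2}b$ yielding rapid smallness off $\widetilde Q$. For the local contribution, the parabolic Calderón--Zygmund estimate for the operator $\partial_t+\Delta^2$ supplies the $L^p\to L^p$ boundedness of $\nabla^{l+2}S$ with $p=(n+6)/(3-l)$; Hölder on $\widetilde Q$ of volume $\sim R^{n+4}$ then converts this to the required $L^{n+6}$ bound with the correct $R^{2/(n+6)}$ scaling. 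The weight passes through $\widetilde Q$ because for $|x|\geq CR$ one has $(1+|x|^\beta)\lesssim (1+|y|^\beta)$ uniformly on $B_{2R}(x)$, while for $|x|\lesssim R$ the weight is $O(1)$.

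The hardest step will be this last one: ensuring that the parabolic Calderón--Zygmund bound interacts cleanly with both the weight and the Morrey scaling. The exponents $(n+6)/(3-l)$ and $R^{2(3-l)/(n+6)}$ in the definition of $Y_{l,T}^\beta$ are forced by the following accounting: two spatial derivatives cost $R^{-2}$, integrated over a parabolic cylinder of measure $R^{n+4}$ and balanced against the Calderón--Zygmund $L^p$ bound, this produces the critical exponent $n+6$ on the left-hand side, matching the Morrey norm in $\|\cdot\|_{X_T^\beta}$. Once these scaling compatibilities are set up, summing over $l=0,1,2$ yields the claimed estimate.
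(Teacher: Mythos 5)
Your overall architecture matches the paper's: a near/far splitting in space to propagate the weight $(1+|x|^\beta)$, H\"older against $L^q$ bounds of $\nabla^m b$ on the near-diagonal (in time) piece, the pointwise sub-Gaussian decay of the kernel plus a dyadic covering on the far piece, and a local/non-local splitting of $g_l$ for the Morrey component. The exponent accounting you give for $Y_{l,T}^\beta$ is also correct.

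However, there is a genuine gap in your treatment of the local part of the Morrey estimate for $l=0,1$. You propose to get $L^{p}\to L^{p}$ boundedness of $\nabla^{l+2}S$ with $p=(n+6)/(3-l)$ from parabolic Calder\'on--Zygmund theory and then ``H\"older on $\widetilde Q$'' to convert this to the required $L^{n+6}$ bound. H\"older goes the wrong way: on a set of finite measure it controls the \emph{lower} Lebesgue norm by the higher one, so an $L^{(n+6)/(3-l)}$ bound on $\nabla^{l+2}Sg_l$ cannot be upgraded to an $L^{n+6}$ bound. What is actually needed is a gain of integrability from $L^{(n+6)/(3-l)}$ to $L^{n+6}$, and for $l=0,1$ this comes not from singular integral theory but from Young's convolution inequality: $\nabla^{2}b$ and $\nabla^{3}b$ are (unlike $\nabla^4 b$) locally integrable in space-time, with $\left\|\nabla^{2+l}b\right\|_{L^{r}(\R^n\times(0,t))}<\infty$ for $r=\tfrac{n+6}{n+4+l}<\tfrac{n+4}{n+2+l}$, and $\tfrac{1}{r}+\tfrac{3-l}{n+6}=1+\tfrac{1}{n+6}$ gives exactly the map $L^{(n+6)/(3-l)}\to L^{n+6}$. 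Genuine Calder\'on--Zygmund theory is needed only for $l=2$, where $\nabla^4 b$ is a true singular kernel and no integrability is gained; there your instinct is correct and matches the paper. With Young's inequality substituted for the $l=0,1$ cases, the rest of your argument goes through as in the paper's proof.
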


\begin{lemma}
  For every $0<T<\infty$, 
  \begin{align}
    \sum_{l=0}^{2}\left\| (f_l[u]-f_l[v])\right\|_{Y_{l, T}^\beta}
    \lesssim\left(\left\| u\right\|_{X_T}+\left\|v\right\|_{X_T}\right)\left\|u-v\right\|_{X_T^\beta}
    \label{WF.nonlinear}
  \end{align}
(Recall the forms \eqref{f0}--\eqref{f2} for the $f_l$'s.)
In particular, there exist $\varepsilon>0$ and $q<1$ such that for all $[v_0]+[p]<\varepsilon$,
  \begin{equation}\label{lemma:kochlamm2-eqn}
 \sum_{l=0}^{2}
 \left\|
 \int_0^t e^{-(t-s)\Delta^2}
 \left(\nabla^l f_l(u)-\nabla^l f_l(v)\right)
 \,ds
 \right\|_{X_T^\beta}\leq q\left\|u-v\right\|_{X_T^\beta}. 
\end{equation}
  \label{lemma:kochlamm2}
\end{lemma}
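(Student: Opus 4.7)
My plan is to mirror the MCF argument of Lemma \ref{lemma:kochlamm}, adapted to the three nonlinear pieces $f_0, f_1, f_2$ of the Willmore equation. The proof splits into two parts: (i) establish the weighted multilinear estimate \eqref{WF.nonlinear}; (ii) convert it into the contraction \eqref{lemma:kochlamm2-eqn} via Lemma \ref{lemma:keylemma2} combined with the Koch--Lamm bound of Theorem \ref{thm:KochLammRegu}.

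For (i), I would start from the multilinear representations \eqref{f0}--\eqref{f2}, in which each $f_l[u]$ is a finite sum of products of $\nabla u$, $\nabla^2 u$, and rational scalars $w[u]^{-2k} = (1+|\nabla u|^2)^{-k}$. Using the telescoping identity $A_1\cdots A_m - B_1\cdots B_m = \sum_{j} B_1\cdots B_{j-1}(A_j - B_j)A_{j+1}\cdots A_m$ together with the Lipschitz bound $|w[u]^{-2k}-w[v]^{-2k}|\lesssim(|\nabla u|+|\nabla v|)|\nabla(u-v)|$ valid in the small-gradient regime, I decompose $f_l[u]-f_l[v]$ into a finite sum of terms, each containing exactly one distinguished factor of $\nabla(u-v)$ or $\nabla^2(u-v)$ with the remaining factors drawn from $\{\nabla u,\nabla v,\nabla^2 u,\nabla^2 v\}$ and bounded scalars.

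The spatial weight $(1+|x|^\beta)$ is then attached to this distinguished difference factor and absorbed into $\|u-v\|_{X_T^\beta}$, while H\"older's inequality on $Q_R(x)$ handles the residual product. The Lebesgue exponents defining $\|\cdot\|_{Y_{l,T}^\beta}$ are tailored precisely to the number of $\nabla^2$-factors: for $f_2$ (one $\nabla^2$), $L^{n+6}\cdot L^\infty \subset L^{n+6}$ with weight $R^{2/(n+6)}$; for $f_1$ (two $\nabla^2$), $L^{n+6}\cdot L^{n+6} \subset L^{(n+6)/2}$ with weight $R^{4/(n+6)}$; for $f_0$ (three $\nabla^2$), $L^{n+6}\cdot L^{n+6}\cdot L^{n+6} \subset L^{(n+6)/3}$ with weight $R^{6/(n+6)}$. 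Each $L^{n+6}$-factor contributes one copy of $\|u\|_{X_T}$ or $\|v\|_{X_T}$; summing over summands and absorbing higher powers under the small-norm hypothesis $\|u\|_{X_T},\|v\|_{X_T}\lesssim\varepsilon \ll 1$ delivers \eqref{WF.nonlinear}.

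For (ii), integrating by parts inside $\int_0^t e^{-(t-s)\Delta^2}\nabla^l f_l\,ds$ moves the $\nabla^l$ onto the biharmonic heat kernel $b$, so this integral coincides with $\nabla^l S(f_l[u]-f_l[v])$. Lemma \ref{lemma:keylemma2}, applied with $g_l=f_l[u]-f_l[v]$, gives
\[\sum_{l=0}^{2}\|\nabla^l S(f_l[u]-f_l[v])\|_{X_T^\beta} \lesssim \sum_{l=0}^{2}\|f_l[u]-f_l[v]\|_{Y_{l,T}^\beta},\]
and combining with \eqref{WF.nonlinear} and the unweighted Koch--Lamm bound $\|u\|_{X_T}+\|v\|_{X_T}\lesssim[v_0]+[p]_\beta \leq \varepsilon$ yields a contraction constant $q\lesssim\varepsilon$, which is strictly below $1$ by choosing $\varepsilon$ small enough.

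The main technical obstacle I anticipate is the weight bookkeeping across the cylinders $Q_R(x)$: when $R \gtrsim |x|$ some $y \in B_R(x)$ may lie near the origin and the pointwise comparison $(1+|x|^\beta) \lesssim (1+|y|^\beta)$ fails, forcing a case split analogous to the one carried out in the proof of Lemma \ref{lemma:MCFlinearestimate} (near-source versus far-source regimes). A secondary bookkeeping issue is organizing the $f_0$-telescope so that the count of $\nabla^2$-factors in each summand does not exceed three, ensuring the H\"older exponent never drops below $(n+6)/3$ and that the $R^{6/(n+6)}$-weight assembles correctly.
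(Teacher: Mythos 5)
Your proposal matches the paper's proof in all essentials: the paper also telescopes each product $f_l[u]-f_l[v]$ into terms with a single distinguished factor $\nabla(u-v)$ or $\nabla^2(u-v)$, applies H\"older on $Q_R(x)$ with exactly the exponents $L^{n+6}\cdots L^{n+6}\subset L^{(n+6)/(3-l)}$ you list, attaches the weight to the difference factor, and then obtains the contraction from Lemma \ref{lemma:keylemma2} together with the Koch--Lamm smallness bound. The only remark is that your anticipated case split over near/far source points in $Q_R(x)$ is not actually needed here, since the weight is carried entirely by the $X_T^\beta$-norm of the difference factor rather than by a kernel integral.
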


\begin{proof}[Proof of Lemma \ref{Lemma:linearestimateMCF}]
  It suffices to show that there exists a $C>0$ depending only on $T, n,\beta$ and $k$ such that if $[p]_\beta\leq 1$, then $\left\|e^{-t\Delta^2}p(x)\right\|_{X_T^\beta}\leq C$. Again, we need to estimate two terms.

First, by the estimate \eqref{eqn:KerDecay2} for the biharmonic kernel $b$, 
for any $k\in \mathbb{N}_+$, there exists $c_k > 0$ such that
  \begin{eqnarray*}
    \Big|t^{\frac{k}{4}}\nabla^k \nabla e^{-t\Delta^2}p(x)\Big|
    &=& \left|\int_{\R^n}t^{\frac{k}{4}}\nabla^k_x \nabla_x b(t, x-y)p(y)dy\right|
    \leq \int_{\R^n}\left|t^{\frac{k}{4}}\nabla^k_y b(x-y, t)\right||\nabla_y p(y)|dy\\
    &\lesssim&\left(\int_{ \left\{ y:|y-x|\leq \frac{t^{\frac{1}{4}}}{2T^{\frac{1}{4}}}|x| \right\}}+\int_{ \left\{ y:|y-x|\geq \frac{t^{\qua}}{2T^{\frac{1}{4}}}|x| \right\}}t^{-\frac{n}{4}}{{e^{-c_k\left|(x-y)t^{-\qua}\right|^{\frac{4}{3}}}}}\frac{1}{1+|y|^\beta}dy\right)\\
    &=:&{{\rm I}}+{{\rm II}}, 
  \end{eqnarray*}
where similar to the MCF case, we have
  \begin{eqnarray*}
   {{ {\rm I}}}&\lesssim& \frac{1}{1+|x|^\beta}\int_{ \left\{ y:|y-x|\leq \frac{t^{\qua}}{2T^{\frac{1}{4}}}|x| \right\}}t^{-\frac{n}{4}}e^{-{{c_k}}\left|(x-y)t^{-\qua}\right|^{\frac{4}{3}}}dy
    \lesssim\XB,\\
    {{\rm II}} &\lesssim& e^{-C|x|^{\frac{4}{3}}}\int_{ \left\{ y:|y-x|\geq \frac{t^\qua}{2T^\qua}|x| \right\}}t^{-\frac{n}{4}}e^{-\frac{{{c_k}}}{2}\left|(x-y)t^{-\qua}\right|^{\frac{4}{3}}}dy
\lesssim \XB
  \end{eqnarray*}
so that
\begin{equation}
  \sup_{0< t< T}\sup_{x\in\mathbb{R}^2}
(1+|x|^\beta)\left|t^{\frac{k}{4}}\nabla^k{{\nabla}} e^{-t\Delta^2} p(x)\right|\lesssim 
1.
\label{WF.lin1}
\end{equation}
Second, we compute
\begin{eqnarray*}
\Big\|t^{\frac{k}{4}}\nabla^k \nabla^2 e^{-t\Delta^2}p(x)\Big\|_{L^{n+6}(Q_R(x))}& = & 
\int_{R^4/2}^{R^4}\int_{B_R(x)} \left[
t^{\frac{k}{4}}\nabla^{k+1}b(y-z,t)\nabla p(z)
\,dz\right]^{n+6}\,dy\,dt\\
& \lesssim & 
\int_{R^4/2}^{R^4}\int_{B_R(x)} \left[
t^{-\frac{1}{4}}
t^{-\frac{n}{4}}
e^{-{{c_k}}\left|(y-z)t^{-\qua}\right|^{\frac{4}{3}}}
\frac{1}{1+|z|^\beta}
\,dz\right]^{n+6}\,dy\,dt\\
& \lesssim & 
\int_{R^4/2}^{R^4}\int_{B_R(x)} \left[
\frac{t^{-\frac{1}{4}}}{1+|y|^\beta}
\,dz\right]^{n+6}\,dy\,dt\\
& \lesssim & 
\int_{R^4/2}^{R^4}
t^{-\frac{n+6}{4}}
\,dt
\int_{B_R(x)} \frac{1}{(1+|y|^\beta)^{n+6}}\,dy\\
& \lesssim & 
R^{-2}\frac{1}{(1+|x|^\beta)^{n+6}}.
\end{eqnarray*}
which implies that
\begin{equation}
\sup_{x\in \R^n}\sup_{0<R^4<T}(1+|x|^\beta)R^{\frac{2}{n+6}}
\left\|
t^{\frac{k}{4}}\nabla^k \nabla^2 e^{-t\Delta^2}p
\right\|_{L^{n+6}(Q_R(x))}
\lesssim 1.
\label{WF.lin2}
\end{equation}

Combining \eqref{WF.lin1} and \eqref{WF.lin2} then gives
Lemma \ref{Lemma:linearestimateMCF}. 
\end{proof}

\begin{proof}[Proof of Lemma \ref{lemma:kochlamm2}]
It is similar to that of Lemma \ref{lemma:kochlamm}. 
We will just highlight some key computations, 
though mostly at the symbolic level.

Recall the form of $f_0$:
$f_0(u) = (\nabla^2 u)^3{\cal P}(\nabla u)$ where
$\cal P$ is some polynomial. Then
\begin{eqnarray*}
f_0(u) - f_0(v)
& = & 
\left((\nabla^2 u)^3 - (\nabla v)^3\right){\cal P}(\nabla u)
+ (\nabla ^2 v)^3\left({\cal P}(\nabla u) - {\cal P}(\nabla v)\right)\\
& \approx & 
{\cal P}(\nabla u)
\left((\nabla^2 u)^2 + (\nabla v)^2\right)(\nabla^2(u-v))
+ (\nabla^2 v)^3{\cal P}'(\nabla u)(\nabla (u-v))
\end{eqnarray*}
so that
\begin{eqnarray*}
& & \|f_0(u) - f_0(v)\|_{L^{\frac{n+6}{3}}(Q_R(x))}\\
& \lesssim &
\|{\cal P}(\nabla u)\|_{L^\infty(\mathbb R^n)}
\|\left((\nabla^2 u)^2 + (\nabla v)^2\right)\nabla^2(u-v)\|_{L^{\frac{n+6}{3}}(Q_R(x))}\\
& & + \|(\nabla^2 v)^3\|_{L^{\frac{n+6}{3}}(Q_R(x))}
\|{\cal P}'(\nabla u)\|_{L^\infty(\mathbb R^n)}
\|\nabla (u-v)\|_{L^\infty(\mathbb R^n)}\\
& \lesssim &
\|{\cal P}(\nabla u)\|_{L^\infty(\mathbb R^n)}
\left(\|\nabla^2 u\|^2_{L^{n+6}(Q_R(x))}
+ \|\nabla^2 v\|^2_{L^{n+6}(Q_R(x))}\right)
\|\nabla^2(u-v)\|_{L^{n+6}(Q_R(x))}\\
& & + 
\|\nabla^2 u\|^3_{L^{n+6}(Q_R(x))}
\|{\cal P}'(\nabla v)\|_{L^\infty(\mathbb R^n)}
\|\nabla (u-v)\|_{L^\infty(\mathbb R^n)}
\end{eqnarray*}
and hence
\[
\|f_0(u) - f_0(v)\|_{Y_{0,T}^\beta}
\lesssim
\left(\|u\|_{X_T}^2 + \|v\|_{X_T}^2\right)\|u-v\|_{X_T^\beta}.
\]

Similarly, for 
$f_1(u) = (\nabla^2 u)^2{\cal P}(\nabla u)$
and $f_2(u) = (\nabla^2 u){\cal P}(\nabla u)$, we have
\begin{eqnarray*}
& & \|f_1(u) - f_1(v)\|_{L^{\frac{n+6}{2}}(Q_R(x))}\\
& \lesssim &
\|{\cal P}(\nabla u)\|_{L^\infty(\mathbb R^n)}
\left(\|\nabla^2 u\|_{L^{n+6}(Q_R(x))}
+ \|\nabla^2 v\|_{L^{n+6}(Q_R(x))}\right)
\|\nabla^2(u-v)\|_{L^{n+6}(Q_R(x))}\\
& & +
\|\nabla^2 u\|^2_{L^{n+6}(Q_R(x))}
\|{\cal P}'(\nabla v)\|_{L^\infty(\mathbb R^n)}
\|\nabla (u-v)\|_{L^\infty(\mathbb R^n)}
\end{eqnarray*}
and 
\begin{eqnarray*}
\|f_2(u) - f_2(v)\|_{L^{n+6}(Q_R(x))}
& \lesssim &
\|{\cal P}(\nabla u)\|_{L^\infty(\mathbb R^n)}
\|\nabla^2(u-v)\|_{L^{n+6}(Q_R(x))}\\
&&+
\|\nabla^2 u\|_{L^{n+6}(Q_R(x))}
\|{\cal P}'(\nabla v)\|_{L^\infty(\mathbb R^n)}
\|\nabla (u-v)\|_{L^\infty(\mathbb R^n)}
\end{eqnarray*}
so that
\begin{eqnarray*}
\|f_1(u) - f_1(v)\|_{Y_{1,T}^\beta},\,\,\,
\|f_2(u) - f_2(v)\|_{Y_{2,T}^\beta}
\lesssim
\left(\|u\|_{X_T}^2 + \|v\|_{X_T}^2\right)\|u-v\|_{X_T^\beta}
\end{eqnarray*}
and hence completing the proof of \eqref{WF.nonlinear}.

\end{proof}

Again, we postpone the proof of 
Lemma \ref{lemma:keylemma2} to the Appendix due to its technicality.

\subsection{Conclusion of the Proof of Theorem \ref{thm:GlobalConver}}
For simplicity, we just write down the steps for WF as it involves more terms.
Recall the equation for $\Phi_\lambda$:
\begin{equation}
  \Phi_\lambda=e^{-\Delta^2 t}p_\lambda+\sum_{l=0}^{2}(\mathcal{N}_l[v+\Phi_\lambda]-\mathcal{N}_l[v])
  \label{eqn:4.15}
\end{equation}
where 
$$
\mathcal{N}_l(g) = \int_0^t e^{-\Delta^2(t-s)}\nabla^l f_l(g)\,ds.
$$
First, taking the $X_T^\beta$ {{norm}} of both sides of the equation, 
by Lemma \ref{lemma:keylemma2} and 
\eqref{lemma:kochlamm2-eqn} of Lemma \ref{lemma:kochlamm2}, we get
\begin{align*}
  \left\|\Phi_\lambda\right\|_{X_T^\beta}
&\leq \left\|e^{-\Delta^2 t}p_\lambda\right\|_{X_T^\beta}+\sum_{l=0}^{2}\left\|\mathcal{N}_l[\Phi_\lambda+v]-\mathcal{N}_l[v]\right\|_{X_T^\beta}\\
&\leq \left\|e^{-\Delta^2 t}p_\lambda\right\|_{X_T^\beta}
+\sum_{l=0}^{2}\left\|f_l(\Phi_\lambda+v)-f_l[v]\right\|_{Y_{l,T}^\beta}\\
  &\leq \left\|e^{-\Delta^2 t}p_\lambda\right\|_{X_T^\beta}+q\left\|\Phi_\lambda\right\|_{X_T^\beta}.
\end{align*}
Hence, upon choosing $[v_0],\,[p] < \varepsilon$ small enough, we will have $q < 1$ 
which implies a uniform bound for $\Phi_\lambda$ in $X_T^\beta$. More precisely,
\begin{equation}
  \left\|\Phi_\lambda\right\|_{X_T^\beta}\lesssim\left\|e^{-\Delta^2 t}p_\lambda\right\|_{X_T^\beta} \lesssim [p_\lambda]_\beta.
\label{PhiLambdaEst}
\end{equation}
Second, from Lemmas \ref{lemma:keylemma2} and \ref{lemma:kochlamm2} again, 
we have that
\begin{eqnarray*}
&&  \sum_{l=0}^{2}\sup_{0<t<T}\|(1+|x|^\beta)(\mathcal{N}_l[\Phi_\lambda+v]-\mathcal{N}_l[v])\|_{L^\infty(\R^n)}\\
& \lesssim & 
\sum_{l=0}^2 \left\|f_l(\Phi_\lambda+v) - f_l(v)\right\|_{Y_{l,T}^\beta}
\lesssim
\left\|\Phi_\lambda\right\|_{X_T^\beta}\lesssim[p_\lambda]_\beta.
\end{eqnarray*}
When $\lambda>1$, we have $[p_\lambda]_\beta\leq [p]_\beta$. Hence
\begin{equation}
  \sup_{\lambda>1}\sum_{l=0}^{2}\|(1+|x|^\beta)(\mathcal{N}_l[\Phi_\lambda+v]-\mathcal{N}_l[v])(x , T)\|_{L^\infty(\R^n)}\lesssim[p]_\beta.
  \label{InftyTDecayEst}
\end{equation}

With the above, we can prove the global $C^1$-convergence. 
Upon setting $T=1$ in \eqref{InftyTDecayEst}, 
we have the 
$\left\{ \Phi_\lambda(\cdot , 1)-e^{-\Delta^2 }p_\lambda(\cdot )
= \sum_{l=0}^2\mathcal{N}_l(\Phi_\lambda+v)-\mathcal{N}_l(v)\right\}_{\lambda>1}$ 
satisfies the equi-decay property, i.e.
$$
\lim_{R\rightarrow\infty}\sup_{\lambda > 0}
\sup_{|x|<R}\Big|\Phi_\lambda(x,1)-e^{-\Delta^2 }p_\lambda(x,1 )\Big|
=0.
$$
From \eqref{eqn:WFunifb2} (with $\gamma=k=0$) and \eqref{WFp} 
(with the latter applied to $\nabla p_\lambda$)
we have
$$
\Big\|\nabla\big(\Phi_\lambda(\cdot,1)-e^{-\Delta^2 }p_\lambda(\cdot,1)\big)\Big\|_{L^\infty(\mathbb{R}^n)}
\leq
\Big\|\nabla\Phi_\lambda(\cdot,1)\Big\|_{L^\infty(\mathbb{R}^n)} 
+ \Big\|e^{-\Delta^2 }\nabla p_\lambda(\cdot,1)\Big\|_{L^\infty(\mathbb{R}^n)}
< \infty.
$$
Finally, recall \eqref{eqn:plambdaconvWF}. Hence by Arzela-Ascoli Theorem, 
we can conclude that $\Phi_{\lambda_j} \longrightarrow \Phi_\infty$ 
in $C^0(\mathbb{R}^n)$ for a subsequence
$\lambda_j\rightarrow\infty$. The proof of $\Phi_\infty\equiv0$ is the same as
in Section \ref{PhiInfty0} for the spatially un-weighted case. 

For the convergence of $\nabla\Phi_\lambda$, by \ref{PhiLambdaEst}, we have that
$\nabla\Phi_\lambda$ is equi-decay, i.e
$$
\lim_{R\rightarrow\infty}\sup_{\lambda > 0}
\sup_{|x|<R}\Big|\nabla\Phi_\lambda(x,1)\Big|
=0.
$$
From \eqref{eqn:WFunifb2} (with $\gamma=1,k=0$), we further have,
$$
\sup_{\lambda>0}
\Big\|\nabla^2\Phi_\lambda(\cdot,1)\Big\|_{L^\infty(\R^n)} < \infty.
$$
Hence, we deduce that
$\nabla\Phi_{\lambda_j}\longrightarrow\nabla\Phi_\infty\equiv0$ uniformly in 
$\mathbb{R}^n$.

The overall $C^1$-convergence of $u_\lambda=\Phi_\lambda+v$  to $v$ is thus 
established.

\section{Generalization to Polyharmonic Flows}\label{sec:polyharm}
As a future perspective and direction, we use this section to illustrate the 
robustness of the current approach and outline an abstract framework for the 
stability of self-similar solutions to possible higher order polyharmonic flows. 
Suppose the polyharmonic flow, in the graphical setting, takes the following form
\begin{equation}
  \left\{
    \begin{array}{ll}
      \pa_t u+A u=N[u], &\text{ on }\R^n\times(0, \infty), \\
      u(x, 0)=u_0(x), &\text{ in }\R^n, 
    \end{array}
  \right.
  \label{eqn:polyhar}
\end{equation}
where $A=(-\Delta)^m$, $m\ge 2$,  and $N[u]$ is the nonlinear term
-- see \cite{HW} for an example of the form of $N$. Furthermore, we assume that 
\eqref{eqn:polyhar} is invariant under the rescaling 
\begin{equation}
  u_\lambda:=\frac{1}{\lambda}u(\lambda x, \lambda^{2m}t).
  \label{eqn:polyscaling}
\end{equation}
Then for the self-similar initial data $v_0(x)=\lambda^{-1}v_0(\lambda x)$ with small Lipschitz norm, we expect the existence of a self-similar solution $v(x, t)$ to \eqref{eqn:polyhar}, i.e., 
\begin{equation*}
  v(x, t)=v_{t^{-\frac{1}{2m}}}(x, t)=t^{\frac{1}{2m}}v(xt^{-\frac{1}{2m}}, 1)=:t^{\frac{1}{2m}}\Psi(xt^{-\frac{1}{2m}}). 
\end{equation*}

One could follow Koch--Lamm's method to find a unique analytic solution to \eqref{eqn:polyhar} with initial data of small Lipschitz norm in the following scale invariant function space:
\begin{multline}
  X_T:=\Big\{f(x, t):\R^n\times(0, T)\to \R\Big|\|f\|_{X_T}:=
    \sum_{k=0}^{m-2}\sup_{0<t<T}t^{\frac{k}{2m}}\|\nabla^k \nabla f(x , t)\|_{L^\infty(\R^n)}\\
  +\sup_{x\in\R^n}\sup_{0<R^{2m}<T}R^{\frac{(m-1)p-n-2m}{p}}\|\nabla^{m}f\|_{L^p(B_R(x)\times(R^{2m}/2, R^{2m}))}<\infty\Big\} \text{ for some $p>n+2m$. }
  \label{eqn:polyfunspace} 
\end{multline}
We anticipate that a similar procedure as in this paper can show the stability of the self-similar solution $v$ under bounded (and small) perturbation, more specifically, for $u_0=v_0(x)+p(x)$ with $\|p\|_{L^\infty(\R^n)}<\infty$ and $\|\nabla p\|_{L^\infty(\R^n)}<\varepsilon$, it holds that 
\begin{equation}
  \lim_{t\to \infty}\left\|t^{-\frac{1}{2m}}u(t^{\frac{1}{2m}}x, t)-\Psi(x)\right\|_{C^k_{\rm loc}(\R^n)}=0,\quad \forall k\in\mathbb{N}^+. 
  \label{}
\end{equation}
Moreover, by putting the difference $u-v$ in the following weighted space:
\begin{multline}
  X_T^\beta:=\Big\{ f(x, t):\R^n\times(0, T)\to \R\Big| \|f\|_{X_T^\beta}:=\sum_{k=0}^{m-2}\sup_{0<t<T}t^{\frac{k}{2m}}\|(1+|x|^\beta)\nabla^{k}\nabla f(x, t)\|_{L^\infty(\R^n)}\\
  +\sup_{x\in \R^n}\sup_{0<R^{2m}<T}(1+|x|^\beta)R^{\frac{(m-1)p-n-2m}{p}}\|\nabla^m f\|_{L^p(B_R(x)\times(R^{2m}/2, R^{2m}))}<\infty\Big\}.
  \label{}
\end{multline}
we gain the equi-decay property which leads to the global convergence 
\begin{equation}
  \lim_{t\to\infty}\left\|t^{-\frac{1}{2m}}u(t^{\frac{1}{2m}}x, t)-\Psi(x)\right\|_{C^k(\R^n)}=0
  \label{}
\end{equation}
provided the perturbation is small in the weighted space, i.e., $\|(1+|x|^\beta)\nabla p\|_{L^\infty(\R^n)}<\varepsilon$.

\appendix
\section{Proof of Lemma \ref{lemma:keylemma1}}\label{apped:keylemma1}
Before the proof, we first recall some $L^p$-estimates concerning the heat kernel
\eqref{heat.kernel} $h(x,t)$:
for $0<t<\infty$, 
  \begin{align}
    &\left\|h\right\|_{L^p(\R^n\times(0, t))}\lesssim t^{\frac{(n+2)-pn}{2p}},
    \quad\text{for } 1\le p<\frac{n+2}{n},
    \label{eqn:Lpheatkernel}
\\
    &\left\|\nabla h\right\|_{L^p(\R^n\times(0, t))}\lesssim t^{\frac{(n+2)-(n+1)p}{2p}},
        \quad\text{for } 1\le p<\frac{n+2}{n+1},
    \label{eqn:LpGradientHeatKernel}\\
&\left\|\int_{0}^{t}\int_{\R^n}\nabla^2 h(z-y, t-s)g(y, s)dyds\right\|_{L^p(\R^n\times \R^+)}\lesssim\left\|g\right\|_{L^p(\R^n\times \R^+)},
        \quad\text{for } 1\le p< \infty,
    \label{eqn:singintheat}
  \end{align}
where the last is from the theory of singular integral \cite{Stein}. The following
pointwise esimate will also be used:
for all $(z, s)\in \R^n\times(0, t)\setminus B_{\sqt}(0)\times(0,
\frac{t}{2})$, it holds that
    \begin{equation}
      |h(z,s)|+\sqrt{t}|\nabla h(z, s)|+t|\nabla^2 h(z, s)|\leq C t^{-\frac{n}{2}}\exp\left(-c\frac{|z|}{\sqrt{t}}\right)
      \label{eqn:heatkernelexpdecay}
    \end{equation}
which follows from the scaling property of the heat kernel.

\begin{proof}[Proof of Lemma \ref{lemma:keylemma1}]
  It suffices to show that if $\left\|g\right\|_{Y_T^\beta}\leq 1$, then 
\[
\sup_{0<t<T}\|(1+|x|^\beta) Sg(x, t)\|_{L^\infty(\R^n)}+\|Sg\|_{X_T^\beta}\lesssim 1.
\]
For this purpose, we need to estimate
$|Sg(x,t)|$, $|\nabla Sg(x,t)|$, and $\|\nabla^2Sg\|_{L^{n+4}(Q_R(x))}$.
We recall the notation 
$Q_{R}(x) = B_{R}(x)\times(\frac{R^2}{2},R^2)$
and further let $Q'_{R}(x):=B_{R}(x)\times(0, \frac{R^2}{2})$. 
Without loss of generality, we fix $T=1$. 

{\bf Estimate for $Sg$.} We decompose
  \begin{align*}
    |Sg(x, t)|&=\left|\int_{0}^{t}\int_{\R^n}h(x-y, t-s)g(y, s)dyds\right|\\
    &\le \int_{Q_{\sqrt{t}}(x)}+\int_{\R^n\times(0, t)\setminus Q_{\sqrt{t}}(x)}| h(x-y, t-s)g(y, s)|dyds\\
    &:=I_1+I_2.
  \end{align*}

  For $I_1$, by H\"{o}lder inequality and the heat kernel estimate \eqref{eqn:Lpheatkernel} with $p=\frac{n+4}{n+3}< \frac{n+2}{n}$, we have,
  \begin{align}
    I_1&\le \|h\|_{L^{\frac{n+4}{n+3}}(Q'_{\sqrt{t}}(0))}\|g\|_{L^{n+4}(Q_{\sqrt{t}}(x))}
    \le \|h\|_{L^{\frac{n+4}{n+3}}(\R^n\times(0, t/2))}\|g\|_{L^{n+4}(Q_{\sqrt{t}}(x))} \nonumber\\
    &\lesssim t^{\frac{6+n}{8+2n}} \|g\|_{L^{n+4}(Q_{\sqrt{t}}(x))}
    =t^{\frac{1}{2}}t^{\frac{1}{n+4}}\|g\|_{L^{n+4}(Q_{\sqrt{t}}(x))}\nonumber\\
    &\lesssim \frac{t^{\frac{1}{2}}}{1+|x|^\beta}.
\label{I1}
  \end{align}

For $I_2$, we estimate it as follows:
    \begin{align*}
      I_2&=\int_{\R^n\times(0, t)\setminus Q_{\sqrt{t}}(x)}|h(x-y, t-s)g(y, s)|dyds\\
      &\lesssim \sum_{m=0}^{\infty}\sum_{z\in 2^{-\frac{m}{2}}\sqrt{t}\Zn}\int_{2^{-m-1}t}^{2^{-m}t}\int_{B_{2^{-\frac{m}{2}}\sqrt{t}}(z)}t^{-\frac{n}{2}}e^{-c\frac{|x-y|}{\sqrt{t}}}|g(y,s)|dyds\\
      &= \sum_{m=0}^{\infty}\sum_{z\in 2^{-\frac{m}{2}}\sqrt{t}\Zn}\int_{Q_{2^{-\frac{m}{2}}\sqrt{t}}(z)}t^{-\frac{n}{2}}e^{-c\frac{|x-y|}{\sqrt{t}}}|g(y,s)|dyds\\
 &\lesssim\left(\sum_{m=0}^{\infty}\sum_{\substack{z\in \ttm \sqt\Zn\\|z-x|\leq \frac{\sqt|x|}{2}}}+\sum_{m=0}^{\infty}\sum_{\substack{z\in \ttm \sqt\Zn\\|z-x|\geq \frac{\sqt|x|}{2}}}\right) \int_{Q_{\ttm\sqt}(z)}t^{-\frac{n}{2}}e^{-c\frac{|x-y|}{\sqt}}|g(y, s)|dyds\\
 &:=I_{21}+I_{22}.
    \end{align*}
To estimate $I_{21}$, we compute,
\begin{align*}
  I_{21}
&\lesssim \sum_{m=0}^{\infty}\sum_{\substack{z\in 2^{-\frac{m}{2}}\sqrt{t}\Zn\\ |z-x|\le \frac{\sqrt{t}|x|}{2}}}e^{-c\frac{|x-z|}{\sqrt{t}}}\int_{Q_{2^{-\frac{m}{2}}\sqrt{t}}(z)}t^{-\frac{n}{2}}|g(y, s)|dyds\\
&\lesssim \sum_{m=0}^{\infty}
\left(\sup_{\substack{z\in 2^{-\frac{m}{2}}\sqrt{t}\Zn\\ |z-x|\le \frac{\sqrt{t}|x|}{2}}}\int_{Q_{2^{-\frac{m}{2}}\sqrt{t}}(z)}t^{-\frac{n}{2}}|g(y, s)|dyds\right)
\left(\sum_{\substack{z\in 2^{-\frac{m}{2}}\sqrt{t}\Zn\\ |z-x|\le \frac{\sqrt{t}|x|}{2}}}
e^{-c\frac{|z-x|}{\sqrt{t}}}\right),
\end{align*}
where we have used the estimate $|\sum_z a(z)b(z)| \leq \sup_z |a(z)|\sum_z |b(z)|$.
Note that
\begin{align*}
\sum_{\substack{z\in 2^{-\frac{m}{2}}\sqrt{t}\Zn\\ |z-x|\le \frac{\sqrt{t}|x|}{2}}}e^{-c\frac{|z-x|}{\sqrt{t}}}
&
\le \sum_{\substack{z\in 2^{-\frac{m}{2}}\sqrt{t}\Zn}}e^{-c\frac{|z|}{\sqrt{t}}}
=\sum_{\substack{\tilde{z}\in\Zn}}e^{-c|\tilde{z}|2^{-\frac{m}{2}}}
\int_{\mathbb{R}^n} e^{-c|\tilde{z}|2^{-\frac{m}{2}}}\,d^n\tilde{z}
\approx 2^{\frac{mn}{2}}
\end{align*}
while
\begin{align*}
&\sup_{\substack{z\in 2^{-\frac{m}{2}}\sqrt{t}\Zn\\ |z-x|\le \frac{\sqrt{t}|x|}{2}}}\int_{Q_{2^{-\frac{m}{2}}\sqrt{t}}(z)}t^{-\frac{n}{2}}|g(y, s)|dyds\\
\leq &t^{-\frac{n}{2}}
\sup_{\substack{z\in 2^{-\frac{m}{2}}\sqrt{t}\Zn\\ |z-x|\le \frac{\sqrt{t}|x|}{2}}}
\|1\|_{L^\frac{n+4}{n+3}(Q_{2^{-\frac{m}{2}}\sqrt{t}}(z))}
\|g\|_{L^{n+4}(Q_{2^{-\frac{m}{2}}\sqrt{t}}(z))}
\\
\le &t^{-\frac{1}{2}}2^{\frac{m(2-(n+2)(n+3))}{2(n+4)}}
\sup_{\substack{z\in 2^{-\frac{m}{2}}\sqrt{t}\Zn\\ |z-x|\le \frac{\sqrt{t}|x|}{2}}}
\left( 2^{-\frac{m}{2}}\sqrt{t} \right)^{\frac{2}{n+4}}\|g\|_{L^{n+4}(Q_{2^{-\frac{m}{2}}\sqrt{t}}(z))}\\
  \le & t^{\frac{1}{2}}2^{\frac{m(2-(n+2)(n+3))}{2(n+4)}} \sup_{\substack{z\in 2^{-\frac{m}{2}}\sqrt{t}\Zn\\ |z-x|\le \frac{\sqrt{t}|x|}{2}}}\frac{1}{1+|z|^\beta}
  \,\,\lesssim\,\,
 \frac{t^{\frac{1}{2}}2^{\frac{m(2-(n+2)(n+3))}{2(n+4)}}}{1+|x|^\beta}.
\end{align*}
Hence
\begin{align}
I_{21} 
& \le
\frac{t^{\frac{1}{2}}}{1+|x|^\beta}
\sum_{m=0}^\infty 
2^{\frac{m(2-(n+2)(n+3))}{2(n+4)}}2^{\frac{mn}{2}}
= 
\frac{t^{\frac{1}{2}}}{1+|x|^\beta}
\sum_{m=0}^\infty 2^{-\frac{m}{2}}
\lesssim
\frac{t^{\frac{1}{2}}}{1+|x|^\beta}.
\label{I21}
\end{align}
For $I_{22}$, we estimate it as
\begin{align*}
  I_{22}&\lesssim 
\sum_{m=0}^{\infty}\sum_{\substack{z\in 2^{-\frac{m}{2}}\sqrt{t}\Zn\\ |z-x|\ge \frac{\sqrt{t}|x|}{2}}}e^{-\frac{c}{4}|x|}\int_{Q_{2^{-\frac{m}{2}}\sqrt{t}}(z)}t^{-\frac{n}{2}}e^{-\frac{c}{2}\frac{|x-y|}{\sqrt{t}}}|g(y, s)|dyds\\
& \lesssim
e^{-\frac{c}{4}|x|}\sum_{m=0}^{\infty}
\sum_{\substack{z\in 2^{-\frac{m}{2}}\sqrt{t}\Zn\\ |z-x|\ge \frac{\sqrt{t}|x|}{2}}}\int_{Q_{2^{-\frac{m}{2}}\sqrt{t}}(z)}t^{-\frac{n}{2}}e^{-\frac{c}{2}\frac{|x-y|}{\sqrt{t}}}|g(y, s)|dyds\\
&\lesssim 
e^{-\frac{c}{4}|x|}\sum_{m=0}^{\infty}
\left(\sup_{\substack{z\in 2^{-\frac{m}{2}}\sqrt{t}\Zn\\ |z-x|\ge \frac{\sqrt{t}|x|}{2}}}\int_{Q_{2^{-\frac{m}{2}}\sqrt{t}}(z)}t^{-\frac{n}{2}}|g(y, s)|dyds\right)
\left(\sum_{\substack{z\in 2^{-\frac{m}{2}}\sqrt{t}\Zn\\ |z-x|\ge \frac{\sqrt{t}|x|}{2}}}
e^{-\frac{c}{2}\frac{|z-x|}{\sqrt{t}}}\right).
\end{align*}
Then similar to the computation for $I_{21}$, we arrive at
\begin{align}
  I_{22}
  \lesssim e^{-\frac{c}{4}|x|} t^{\frac{1}{2}}
\sum_{m=0}^{\infty} 
\left(\sup_{z\in 2^{-\frac{m}{2}}\sqrt{t}\Zn}\frac{1}{1+|z|^\beta}\right)
2^{-\frac{m}{2}}
 \lesssim e^{-\frac{c}{4}|x|}t^{\frac{1}{2}}
\lesssim \frac{t^{\frac{1}{2}}}{1+|x|^\beta}.  
\label{I22}
\end{align}
Combining \eqref{I1}, \eqref{I21}, and \eqref{I22}, we obtain
\begin{equation}
\sup_{0<t<T}\|(1+|x|^\beta) Sg(x, t)\|_{L^\infty(\R^n)}\lesssim t^{\frac{1}{2}}
\lesssim 1.
\label{0mcf}
\end{equation}

We re-state the estimate $I_2$ here for future usage:
\begin{align}
I_2
&=\int_{\R^n\times(0, t)\setminus Q_{\sqrt{t}}(x)}|h(x-y, t-s)g(y, s)|dyds
\nonumber\\
&\lesssim
\int_{\R^n\times(0, t)\setminus Q_{\sqrt{t}}(x)}
t^{-\frac{n}{2}}e^{-c\frac{|x-y|}{\sqt}}|g(y, s)|dyds
\lesssim
\frac{t^{\frac{1}{2}}}{1+|x|^\beta}.
\label{I2}
\end{align}
{\bf Estimate for $\nabla Sg$}. 
    \begin{align*}
      \left|\nabla Sg(x,t)\right|&=\left|\int_{0}^{t}\int_{\R^n}\nabla h(x-y, t-s)g(y, s)dyds\right|\\
      &\leq \int_{Q_{\sqt}(x)}+\int_{\R^n\times(0, t
      )\setminus Q_{\sqt}(x)}|\nabla h(x-y, t-s)g(y, s)|dyds\\
      &:=J_1+J_2.
    \end{align*}
    For $J_1$, by H\"{o}lder inequality, using the heat kernel estimate \eqref{eqn:LpGradientHeatKernel} with $p=\frac{n+4}{n+3} < \frac{n+2}{n+1}$, we can derive
    \begin{align}
      J_1&\leq \left\|\nabla h\right\|_{L^{\frac{n+4}{n+3}}(Q'_{\sqt}(0))}\left\|g\right\|_{L^{n+4}(Q_{\sqt}(x))}
      \leq \left\|\nabla h\right\|_{L^{\frac{n+4}{n+3}}(\R^n\times(0, t/2)}\left\|g\right\|_{L^{n+4}(Q_{\sqt}(x))}\nonumber\\
      &\lesssim \sqrt{t}^{\frac{2}{n+4}}\left\|g\right\|_{L^{n+4}(Q_{\sqt}(x))}
      \,\,\lesssim\,\,\XB.
\label{J1}
    \end{align}
For $J_2$, we can exactly follow the derivation of \eqref{I2}.
The only change is the appearance of $t^{-\frac{1}{2}}$ due to the 
point-wise estimate
of $\nabla h$ in \eqref{eqn:heatkernelexpdecay}.
    \begin{align}
    J_2  &=\int_{\R^n\times(0, t)\setminus Q_{\sqt}(x)}|\nabla h(x-y, t-s) g(y, s)|dyds\nonumber\\
      &\lesssim 
\int_{\R^n\times(0, t)\setminus Q_{\sqt}(x)}
t^{-\frac{1}{2}}t^{-\frac{n}{2}}e^{-c\frac{|x-y|}{\sqrt{t}}}|g(y, s)|dyds
\lesssim \frac{1}{1+|x|^\beta}\label{J2}.
    \end{align}
Combining \eqref{J1} and \eqref{J2}, we have
   \begin{equation}
   	\sup_{0<t<1}\sup_{x\in \R^n}(1+|x|^\beta)\left|\nabla Sg(x, t)\right|
\lesssim 1.
   	\label{eqn:Duhamelat1}
   \end{equation}
{\bf Estimate for $\nabla^2 Sg$}. For this, we need to show 
  \begin{equation}
    \sup_{0<R^2<1}R^{\frac{2}{n+4}}\left\|\nabla^2 Sg\right\|_{L^{n+4}(Q_R(x))}\lesssim\frac{1}{1+|x|^\beta}. 
    \label{eqn:5.12}
  \end{equation}
For this purpose, we compute
\begin{align*}
  &R^{\frac{2}{n+4}}\left\|\nabla^2 Sg(z, t)\right\|_{L^{n+4}(Q_R(x))}\\ =&R^{\frac{2}{n+4}}\left\|\int_{0}^{t}\int_{\R^n}\nabla^2 h(z-y,t-s)g(y, s)dyds\right\|_{L^{n+4}(Q_R(x))}\\
  =&R^{\frac{2}{n+4}}\Bigg\|\int_{R^n\times(0, t)\setminus B_{2R}(x)\times(R^2/4, R^2)}
  +\int_{B_{2R}(x)\times(R^2/4, R^2)}\nabla^2 h(z-y, t-s)g(y, s)dyds\Bigg\|_{L^{n+4}(Q_R(x))}\\
  =&R^{\frac{2}{n+4}}\Bigg\|\int_{R^n\times(0, t)\setminus B_{2R}(x)\times(R^2/4, R^2)} \nabla^2 h(z-y, t-s)g(y, s)dyds\Bigg\|_{L^     {n+4}(Q_R(x))}\\
& + R^{\frac{2}{n+4}}
  \Bigg\|\int_{B_{2R}(x)\times(R^2/4, R^2)}\nabla^2 h(z-y, t-s)g(y, s)dyds\Bigg\|_{L^{n+4}(Q_R(x))}\\
  :=& K_1+K_2.
\end{align*}

For $K_1$, we have
\begin{align}
  K_1&=R^{\frac{2}{n+4}}\left\|\int_{R^n\times(0, t)\setminus B_{2R}(x)\times(R^2/4, R^2)}\nabla^2 h(z-y, t-s)g(y, s)dyds\right\|_{L^{n+4}(Q_R(x))}\nonumber\\
  &\lesssim R^{\frac{2}{n+4}}\left\|\frac{t^{-\frac{1}{2}}}{1+|z|^\beta}\right\|_{L^{n+4}(Q_R(x))}
  \,\,\lesssim\,\, \frac{1}{1+|x|^\beta}, 
\label{K1}
\end{align}
where we have used again the estimate \eqref{I2}
for $I_2$ but with $h$ replaced by $\nabla^2h$. The $t^{-\frac{1}{2}}$ factor
is due to the pointwise estimate for $\nabla^2 h$ from 
\eqref{eqn:heatkernelexpdecay}. Note also $\frac{R^2}{2}< t < R^2$.

For $K_2$, 
\begin{align}
  K_2&:=R^{\frac{2}{n+4}}\left\|\int_{B_{2R}(x)\times(R^2/4, R^2)}\nabla^2 h(z-y, t-s)g(y, s)dyds\right\|_{L^{n+4}(Q_R(x))}\nonumber\\
   &\lesssim R^{\frac{2}{n+4}}\left\|\chi_{B_{2R}(x)\times(R^2/4, R^2)}g(z, t)\right\|_{L^{n+4}(\R^n\times\R_+)}\nonumber\\
  &\lesssim R^{\frac{2}{n+4}}\|g\|_{L^{n+4}(B_{2R}(x)\times(R^2/4, R^2))}
  \,\,\lesssim\,\,\XB.
\label{K2}
\end{align}
where the second inequality is due to \eqref{eqn:singintheat}.

Hence \eqref{eqn:5.12} holds upon combining \eqref{K1} and \eqref{K2}.
      \end{proof}

\section{Proof of Lemma \ref{lemma:keylemma2}}\label{appe:keylemma2}
The strategy here is very similar to Lemma \ref{lemma:keylemma1}. 
The main difference
is the usage of the estimates of the biharmonic kernel $b$ and also the fact 
that we need to deal with $g_l$ for $l=0,1,2$.
For $0\le k\le 3$ and $t>0$, we have from \eqref{eqn:KerDecay2} that,
  \begin{equation}
    \left\|\nabla^k b\right\|_{L^p(\R^n\times(0, t)}\leq Ct^{\frac{(n+4)-p(n+k)}{4p}}, \qquad 1\leq p< \frac{n+4}{n+k}, 
        \label{biharm-basic-est}
  \end{equation}
while for $k=4$, the following comes from the theory of singular integrals 
\cite{Stein},
\begin{equation}
  \left\|\int_{0}^{t}\int_{\R^n}\nabla^4 b(z-y, t-s)g(y,s)\,dyds\right\|_{L^p(\R^n\times\R^+)}\lesssim\left\|g\right\|_{L^p(\R^n\times\R^+)}.
  \label{singIntWF}
\end{equation}
Furthermore, from the scaling property of the kernel, the following pointwise 
estimates hold,
\begin{equation}
  \sum_{k=0}^{4}\left|(\fqt\nabla)^k b(z, s)\right|\lesssim t^{-\frac{n}{4}}\exp\left( -c\frac{|z|}{\fqt} \right), \quad \forall (y, s)\in \R^n\times(0, t)\setminus Q'_{\fqt}(0).
  \label{PtwiseBiharm}
\end{equation}
where we recall the notation,
$Q_R(x) = B_R(x)\times(\frac{R^4}{2},R^4)$ and 
$Q'_R(x) = B_R(x)\times(0,\frac{R^4}{2})$. 

\begin{proof}[Proof of Lemma \ref{lemma:keylemma2}] The proof follows a similar 
paradigm as in the previous section. 
It suffices to show that there exists a $C>0$ such that if $\sum_{l=0}^{2}\left\|g_l\right\|_{Y_{l, T}^\beta}\leq 1$, then 
$$\sum_{l=0}^{2}\sup_{0<t<T}\|(1+|x|^\beta)\nabla^l Sg_l(x , t)\|_{L^\infty(\R^n)}+\left\|\nabla^l Sg_l\right\|_{X_T^\beta} \leq C.
$$
Without loss of generality, we fix $T=1$. 
Note also 
$Q_{\fqt}(x):=B_{\fqt}(x)\times(t/2, t)$ and
$Q'_{\fqt}(x):=B_{\fqt}(x)\times(0, t/2)$.
Now we estimate the relevant quantities.

{\bf Estimate for $Sg_l$ ($0 \leq l < 2$).}
We compute,
  \begin{align*}
\left|\nabla^l S g_l(x, t)\right|
    &=\left|\int_{0}^{t}\int_{\R^n}\nabla^l b(x-y, t-s)g_l(y, s)dyds\right|\\
    &\leq \left(\int_{Q'_{\fqt}(0)}+\int_{\R^n\times(0, t)\setminus Q_{\fqt}(x)} \right) |\nabla^l b(x-y, t-s)g_l(y, s)|dyds\\
    &=I_1+I_2. 
  \end{align*}

For $I_1$, by the H\"{o}lder inequality, using the kernel estimate \eqref{biharm-basic-est} with $p=\frac{n+6}{n+3+l}< \frac{n+4}{n+l}$, we arrive at
  \begin{align}
    I_1\leq & \left\|\nabla^{l}b\right\|_{L^{\frac{n+6}{n+3+l}}(Q'_{\fqt}(0))}\left\|g_l\right\|_{L^{\frac{n+6}{3-l}}(Q_{\fqt}(x))}
    \leq \left\|\nabla^{l}b\right\|_{L^{\frac{n+6}{n+3+l}}(\R^n\times(0, t))}\left\|g_l\right\|_{L^{\frac{n+6}{3-l}}(Q_{\fqt}(x))}\nonumber\\
\lesssim&
t^{\frac{n+4-(n+l)p}{4p}}
t^{-\frac{1}{4}\left(\frac{6-2l}{n+6}\right)}
\left(t^{\frac{1}{4}\frac{6-2l}{n+6}}
\left\|g_{{l}}\right\|_{L^{\frac{n+6}{3-l}}(Q_{\fqt}(x))}\right)
    \lesssim \frac{t^{\frac{1}{4}}}{1+|x|^\beta}.
\label{I1w}
  \end{align}

For $I_2$, we make use of \eqref{PtwiseBiharm} and compute
\begin{align*}
  I_2&\lesssim\int_{\R^n\times(0, t)\setminus Q_{\fqt}(x)}\left|\nabla^lb(x-y, t-s)g_l(y, s)\right|dyds\\
  & \lesssim\sum_{m=0}^{\infty}\sum_{z\in \tfm\fqt\Zn}\int_{Q_{\tfm\fqt}(z)}t^{-\frac{n+l}{4}}e^{-c\frac{|x-y|}{\fqt}}|g_l(y, s)|dyds\\
  &\lesssim\left( \sum_{m=0}^{\infty}\sum_{\substack{z\in \tfm\fqt\Zn\\|z-x|\leq \frac{\fqt|x|}{2}}}+\sum_{m=0}^{\infty}\sum_{\substack{z\in\tfm\fqt\Zn\\|z-x|\geq\frac{\fqt|x|}{2}}} \right) \int_{Q_{\tfm\fqt}(x)}t^{-\frac{n+l}{4}}e^{-c\frac{|x-y|}{\fqt}}|g_l(y,s)|dyds\\
  &:=I_{21}+I_{22}. 
\end{align*}
Again, similar to the previous section, we have
\begin{align}
I_{21} 
= & \sum_{m=0}^{\infty}\sum_{\substack{z\in\tfm\fqt\Zn\\|z-x|\leq \frac{\fqt|x|}{2}}}\int_{Q_{\tfm\fqt}(z)}t^{-\frac{n+l}{4}}e^{-c\frac{|x-y|}{\fqt}}|g_l(y,s)|dyds\nonumber\\
\leq & t^{-\frac{n+l}{4}} \sum_{m=0}^{\infty}
\left(\sum_{\substack{z\in\tfm\fqt\Zn\\|z-x|\leq \frac{\fqt|x|}{2}}}
e^{-c\frac{|x-z|}{\fqt}}\right)
\left(\sup_{\substack{z\in\tfm\fqt\Zn\\|z-x|\leq \frac{\fqt|x|}{2}}}\int_{Q_{\tfm\fqt}(z)}|g_l(y,s)|dyds\right)\nonumber\\
\lesssim&
t^{-\frac{n+l}{4}} \sum_{m=0}^{\infty} 2^{\frac{mn}{4}}
\left(\sup_{\substack{z\in\tfm\fqt\Zn\\|z-x|\leq \frac{\fqt|x|}{2}}}
\|1\|_{L^{\frac{n+6}{n+3+l}}\left(Q_{\tfm\fqt}(z)\right)}
\|g_l\|_{L^{\frac{n+6}{3-l}}\left(Q_{\tfm\fqt}(z)\right)}
\right)\nonumber\\
\lesssim & 
t^{\frac{1}{4}}\sum_{m=0}^\infty \frac{2^{-\frac{m(1+l)}{4}}}{1+|x|^\beta}
\lesssim 
\frac{t^{\frac{1}{4}}}{1+|x|^\beta}
\label{I21w}
\end{align}
while for $I_{22}$,
\begin{align}
I_{22}
= & \sum_{m=0}^{\infty}\sum_{\substack{z\in\tfm\fqt\Zn\\|z-x|> \frac{\fqt|x|}{2}}}\int_{Q_{\tfm\fqt}(z)}t^{-\frac{n+l}{4}}e^{-c\frac{|x-y|}{\fqt}}|g_l(y,s)|dyds\nonumber\\
\leq & t^{-\frac{n+l}{4}} e^{-\frac{c}{4}|x|}\sum_{m=0}^{\infty}
\left(\sum_{\substack{z\in\tfm\fqt\Zn\\|z-x|>\frac{\fqt|x|}{2}}}
e^{-c\frac{|x-z|}{\fqt}}\right)
\left(\sup_{\substack{z\in\tfm\fqt\Zn\\|z-x|> \frac{\fqt|x|}{2}}}\int_{Q_{\tfm\fqt}(z)}|g_l(y,s)|dyds\right)\nonumber\\
\lesssim&
t^{-\frac{n+l}{4}} e^{-\frac{c}{4}|x|}\sum_{m=0}^{\infty} 2^{\frac{mn}{4}}
\left(\sup_{z\in\tfm\fqt\Zn}
\|1\|_{L^{\frac{n+6}{n+3+l}}\left(Q_{\tfm\fqt}(z)\right)}
\|g_l\|_{L^{\frac{n+6}{3-l}}\left(Q_{\tfm\fqt}(z)\right)}
\right)\nonumber\\
\lesssim &
t^{\frac{1}{4}}e^{-\frac{c}{4}|x|}
\sum_{m=0}^\infty 2^{-\frac{m(1+l)}{4}}
\sup_{z\in\tfm\fqt\Zn}
\frac{1}{1+|z|^\beta}
\lesssim
\frac{t^{\frac{1}{4}}}{1+|x|^\beta}.
\label{I22w}
\end{align}

Combining \eqref{I1w}, \eqref{I21w} and \eqref{I22w} leads to
$$
\sup_{0<t<1}\sup_{x\in\R^n} (1+|x|^\beta)\sum_{l=0}^{2}|\nabla^lSg_l(x, t)|
\lesssim
t^{\frac{1}{4}}\sum_{l=0}^{2}\left\|g_l\right\|_{Y_{l,1}^\beta}
\lesssim
\sum_{l=0}^{2}\left\|g_l\right\|_{Y_{l,1}^\beta}.
$$

{\bf Estimate for $\nabla Sg_l$ ($0 \leq l < 2$).}
The same computation leads to
$$
\sup_{0<t<1}\sup_{x\in\R^n} (1+|x|^\beta)
\sum_{l=0}^{2}|\nabla^l\nabla Sg_l(x, t)|
\lesssim
\sum_{l=0}^{2}\left\|g_l\right\|_{Y_{l,1}^\beta}.
$$
This is essentially the same as going from \eqref{0mcf} to 
\eqref{eqn:Duhamelat1}. Hence we just outline the key computation.
  \begin{align*}
    \left|\nabla \nabla^l S g_l(x, t)\right|
    &=\left|\int_{0}^{t}\int_{\R^n}\nabla \nabla^l b(x-y, t-s)g_l(y, s)dyds\right|\\
    &\leq \left(\int_{Q'_{\fqt}(0)}+\int_{\R^n\times(0, t)\setminus Q_{\fqt}(x)} \right) |\nabla \nabla^l b(x-y, t-s)g_l(y, s)|dyds\\
    &:=J_1+J_2.
  \end{align*}

For $J_1$, by the H\"{o}lder inequality, using the kernel estimate 
\eqref{biharm-basic-est} with $p=\frac{n+6}{n+3+l} < \frac{n+4}{n+l+1}$, 
$l=0,1,2$ so that we can derive
  \begin{align*}
    J_1 \leq & \left\|\nabla^{l+1}b\right\|_{L^{\frac{n+6}{n+3+l}}(Q'_{\fqt}(0))}\left\|g_l\right\|_{L^{\frac{n+6}{3-l}}(Q_{\fqt}(x))}
    \leq  \left\|\nabla^{l+1}b\right\|_{L^{\frac{n+6}{n+3+l}}(\R^n\times(0, t))}\left\|g_l\right\|_{L^{\frac{n+6}{3-l}}(Q_{\fqt}(x))}\\
    \lesssim& \fqt^{\frac{6-2l}{n+6}}\left\|g_{{l}}\right\|_{L^{\frac{n+6}{3-l}}(Q_{\fqt}(x))}
    \lesssim \XB.
  \end{align*}
For $J_2$, the computation is similar. The extra
factor $t^{-\frac{1}{4}}$ coming from $\nabla^{l+1} b$ is absorbed
by the $t^{\frac{1}{4}}$ in \eqref{I1w}, \eqref{I21w}, and \eqref{I22w}.

{\bf Estimate for $\nabla^2 Sg_l$ ($0 \leq l < 2$).}
For this, we need to show 
\begin{equation}
  \sup_{0<R^4<1}R^{\frac{2}{n+6}}\left\|\nabla^{2+l}Sg_l(z,t)\right\|_{L^{n+6}(Q_R(x))}\lesssim\frac{1}{1+|x|^\beta}, \quad l=0,1,2.
  \label{eqn:hiorderWF}
\end{equation}
We first compute,
\begin{align*}
& \|\nabla^{2+l}Sg_l\|_{L^{n+6}(Q_R(x))}\\
= & 
\left\|\int_{\R^n\times(0, t)\setminus B_{2R(x)}\times (R^4/4, R^4)}
+ \int_{B_{2R(x)}\times (R^4/4, R^4)}
\nabla^{2+l}b(z-y, t-s)g_l(y, s)dyds\right\|_{L^{n+6}(Q_R(x))}\\
\leq & 
\left\|\int_{\R^n\times(0, t)\setminus B_{2R(x)}\times (R^4/4, R^4)}
\nabla^{2+l}b(z-y, t-s)g_l(y, s)|dyds\right\|_{L^{n+6}(Q_R(x))}\\
& + \left\|\int_{B_{2R(x)}\times (R^4/4, R^4)}
\nabla^{2+l}b(z-y, t-s)g_l(y, s)dyds\right\|_{L^{n+6}(Q_R(x))}\\
:= & K_1 + K_2.
\end{align*}

For $K_1$, using the same arguments as the $K_1$ in the previous section,
we get the pointwise bound
$$
\int_{\R^n\times(0, t)\setminus B_{2R(x)}\times (R^4/4, R^4)}
|\nabla^{2+l}b(z-y, t-s)g(y, s)|dyds
\lesssim\frac{t^{-\frac14}}{1+|z|^\beta}
$$
so that
\begin{align}
R^{\frac{2}{n+6}}\left\|\frac{t^{-\frac14}}{1+|z|^\beta}\right\|_{L^{n+6}(Q_R(x))}
\lesssim R^{\frac{2}{n+6}}
\frac{t^{-\frac{1}{4}}}{1+|x|^\beta}
\left(R^nR^4\right)^{\frac{1}{n+6}}
\approx
\frac{1}{1+|x|^\beta}
\label{2wf-1}
\end{align}
where we have used the fact that $\frac{R^4}{2}<t<R^4$.

For $K_2$, we can focus on the $L^{n+6}$-estimate for $\nabla^{2+l}Sg_l$ with 
$g_l$ supported in $Q_{2R}(x).$ First we recall the Young inequality:
\begin{equation*}
  \left\|f*g\right\|_{L^m(\R^n\times\R^+)}\leq C\left\|f\right\|_{L^p(\R^n\times\R^+)}\left\|g\right\|_{L^q(\R^n\times\R^+)},
\end{equation*}
where $0\leq p, q, m\leq \infty$, and $p^{-1}+q^{-1}=1+m^{-1}$. Applying the inequality with 
$m=n+6$, $p=\frac{n+6}{n+4} < \frac{n+4}{n+2}$, $q=\frac{n+6}{3}$, 
respectively 
$m=n+6$, $p=\frac{n+6}{n+5} < \frac{n+4}{n+3}$, $q=\frac{n+6}{2}$, 
we get
\begin{align*}
  \left\|\nabla^2 S g_0\right\|_{L^{n+6}(\R^n\times (0, 1))}&\lesssim\left\|g_0\right\|_{L^{\frac{n+6}{3}}(\R^n\times\R^+)}=\left\|g_0\right\|_{L^{\frac{n+6}{3}}(B_{2R}(x)\times(R^4/4, R^4))}, \\
\text{and}\,\,\,
\left\|\nabla^3 S g_1\right\|_{L^{n+6}(\R^n\times(0, 1))}&\lesssim\left\|g_1\right\|_{L^{\frac{n+6}{2}}(\R^n\times\R^+)}=\left\|g_1\right\|_{L^{\frac{n+6}{2}}(B_{2R}(x)\times(R^4/4, R^4))}.
\end{align*}
Hence
\begin{align}
R^{\frac{2}{n+6}}\left\|\nabla^2 S g_0\right\|_{L^{n+6}(\R^n\times (0, 1))},
\,\,\,
R^{\frac{2}{n+6}}
\left\|\nabla\nabla^2 S g_1\right\|_{L^{n+6}(\R^n\times (0, 1))}
\lesssim \XB.
\label{2wf-2}
\end{align}
For the $L^{n+6}$ norm of $\nabla^4 Sg_2$, by the singular integral estimate
\eqref{singIntWF} with $p=n+6$, we have that
\begin{align}
  &R^{\frac{2}{n+6}}\left\|\int_{B_{2R}(x)\times(R^4/4, R^4)}\nabla^4 b(z-y, t-s)g_2(y, s)dyds\right\|_{L^{n+6}(Q_R(x))}\\
  \lesssim &R^{\frac{2}{n+6}}\left\|g_2\right\|_{L^{n+6}(B_{2R}(x)\times(R^4/4, R^4))}
 \lesssim\frac{1}{1+|x|^\beta}.
\label{2wf-3}
\end{align}
Combining \eqref{2wf-1}, \eqref{2wf-2}, and \eqref{2wf-3} gives
\eqref{eqn:hiorderWF}, completing the proof. 
\end{proof}

\noindent
\textbf{Acknowledgment.} The authors thank Changyou Wang for useful discussion.


\end{document}